\newtheorem{theo}{Theorem}[section]
\theoremstyle{plain}
\newtheorem{cor}[theo]{Corollary}
\newtheorem{dfn}[theo]{Definition}
\newtheorem{lemma}[theo]{Lemma}
\newtheorem{proposition}[theo]{Proposition}
\newtheorem{remark}[theo]{Remark}
\numberwithin{equation}{section}
\begin{document}
\title{Lower Quasicontinuity, Joint Continuity and Related concepts}

\author{A. Bouziad}
\address{D\'epartement de Math\'ematiques, Universit\'e de Rouen,
UMR CNRS 6085, Avenue de l'Universit\'e, BP.12, F76801 Saint-Etienne 
du Rouvray, France}
\email{ahmed.bouziad@univ-rouen.fr}

\author{J.-P. Troallic}
\address{3 Impasse des Avocettes,
F76930 Cauville sur Mer,
France}
\email{jean-pierre.troallic@wanadoo.fr}

\subjclass[2000]{Primary 54C05; 54C08; Secondary  54C60}

\keywords{Joint continuity;  quasicontinuity  with respect to one  variable;   cliquishness with 
respect to one  variable; lower quasicontinuity with respect to one variable}

\begin{abstract}   Let   $X$ and $Y$ be  topological  spaces,  let  $Z$  be a metric space,   and  let $f: X\times Y\to Z$ 
be  a mapping.   It is shown that when $Y$ has a countable base $\mathcal B$, then under a rather general condition 
on the set-valued mappings $X\ni x\to f_x(B)\in 2^Z$, $B\in\mathcal B$,   there is a residual  set $R\subset X$ such 
that  for every  $(a,b)\in R\times Y$,  $f$ is jointly continuous at  $(a,b)$ if (and only if)  $f_a: Y\to Z$  is continuous at 
$b$.  Several new results are also established when the notion of continuity  is replaced by  that of quasicontinuity  
or  by  that of cliquishness.  Our approach allows  us to unify and improve various results from the literature.
\end{abstract}
\maketitle

\section{introduction} Let $f$ be a mapping of the product  $X\times Y$ of two    topological  spaces  into  a metric 
space $Z$.   Let  $\mathcal B$ be a countable collection  of  subsets of $Y$ and let $B$ be the set of all $y\in Y$ 
such that $\mathcal B$  includes  a   neighborhood  base  at  $y$ in $Y$.  In this note, we are mainly concerned
with the following question  (in the spirit of \cite{CT}):  find  some general assumptions   on the partial mappings 
$f_x$, $f^y$ ($x\in X$, $y\in Y$)  ensuring  the existence of a residual set $R\subset X$ such that $f$ is jointly 
continuous at each  point of $R\times B$.  In \cite{CT},  it is shown that if $f$ is separately  continuous,  then  such 
a set  $R$ exists. Theorem 3.3 below  is the main result of this note;  the first   of  assertions (1), (2) and (3) 
constituting  this theorem   is a  much more general  result than that  of  \cite{CT}  (although the proof is hardly more 
difficult);   the  other  two are  of the same sort as (1),  with the concept  of continuity  replaced  by that of 
quasicontinuity  in (2),  and that of  cliquishness  in (3).  \par

The paper is organized as follows.  In Section 2, we introduce and examine   the concept of lower quasicontinuity with
respect to the variable $x$ (at $(a, b)\in X\times Y$)   for  the mapping $f: X\times Y\to Z$. (Here, $Z$ may be any  
topological space.)   The statement and the proof of the main Theorem 3.3, as well as  some immediate  corollaries, 
occupy   Section 3.  Finally,   the results of Sections 2 and 3 are related  in Section 4  
to some well-known theorems from the literature.  \par

 As usual, $\mathbb N$, $\mathbb Q$ and $\mathbb R$   will denote, respectively,  the sets of natural, rational and 
real numbers. For any set  $A$,  $2^A$  will  denote  the set of all nonempty subsets of $A$.  For $x\in X$,  
${\mathcal V}_X(x)$ will denote  the set of all  neighborhoods of $x$ in $X$.

\section{Lower quasicontinuity and  mappings of two variables}

In this section, $X$, $Y$, $Z$ are topological spaces and $f: X\times Y\to Z$ is a mapping.  First, recall that a mapping 
$g:X\to Z$ is said to be {\it quasicontinuous}  at $a\in X$ if for each  neighborhood $U$ of $a$ in $X$ and each 
 neighborhood   $W$ of $g(a)$  in $Z$, there is an  open set  $O\subset X$  such that $\emptyset\not= O\subset U$ and 
$g(O)\subset W$ \cite{Th,Ma} (see also \cite{Ke,Bl}; the
terminology differs in \cite{Th,Bl}).  Obviously,  
$g$ is  said to be quasicontinuous  if it  is quasicontinuous  at each point  of $X$. (From now on,  obvious definitions will 
be omitted.)     A set-valued  mapping   $F: X\to 2^Z$  is said to be  {\it lower quasicontinuous}  at $x_0\in X$  if  for each 
 neighborhood $U$ of $x_0$ in $X$ and  each  open set $W\subset Z$ such that $F(x_0)\cap W\not=\emptyset$, there is 
an open set $O\subset X$ such that  $\emptyset\not= O\subset U$ and $F(x)\cap W\not=\emptyset$ for every 
$x\in O$  \cite{Ne,N} (and  \cite{BG} with another terminology). Given
any
$W\subset Z$, we put 
$F^-(W)=\{x\in X : F(x)\cap W\not=\emptyset\}$.   \par

The following concept  is formulated in \cite{MN}: The mapping $f:X\times Y\to Z$ is said to be {\it horizontally 
quasicontinuous}  at $(a, b)\in X\times Y$  if for each  neighborhood $U$ of $a$ in $X$,   each  neighborhood $V$ of $b$
 in $Y$, and  each neighborhood  $W$ of $f(a, b)$ in  $Z$, there is an  open set $O\subset X$ and $y\in V$ such that 
$\emptyset\not= O\subset U$  and $f(O\times\{y\})\subset W$.   Let us note  that if $f^{b}$ is quasicontinuous at $a$,  then 
$f$ is horizontally quasicontinuous at $(a,b)$.   It is also easy to check the following:

\begin{proposition} 
If  $a\in X$ and if $f: X\times Y\to Z$  is horizontally quasicontinuous at $(a, y)$ for every $y$  in a  nonempty  open  set 
$V\subset Y$, then the set-valued mapping  $F^V:X\ni x\to f_x(V)\in 2^Z$ is lower  quasicontinuous at $a$.
\end{proposition}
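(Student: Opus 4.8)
The goal is to show $F^V$ is lower quasicontinuous at $a$, so I start by unpacking that definition: I fix a neighborhood $U$ of $a$ in $X$ and an open set $W\subset Z$ with $F^V(a)\cap W\neq\emptyset$, and I must produce a nonempty open $O\subset U$ with $F^V(x)\cap W\neq\emptyset$ for all $x\in O$. The hypothesis $F^V(a)\cap W\neq\emptyset$ means $f_a(V)\cap W\neq\emptyset$, i.e. there is some $b\in V$ with $f(a,b)\in W$. This is the point at which I will apply horizontal quasicontinuity: since $b\in V$, $f$ is horizontally quasicontinuous at $(a,b)$.

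Next I feed the right data into horizontal quasicontinuity at $(a,b)$. Since $W$ is open and contains $f(a,b)$, it is itself a neighborhood of $f(a,b)$ in $Z$; $U$ is a neighborhood of $a$ in $X$; and $V$ is a neighborhood of $b$ in $Y$ (here I use that $V$ is open and $b\in V$). Horizontal quasicontinuity then hands me a nonempty open set $O\subset X$ with $O\subset U$ and an element $y\in V$ such that $f(O\times\{y\})\subset W$.

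Now I just read off the conclusion: for every $x\in O$ we have $f(x,y)\in W$ with $y\in V$, hence $f(x,y)\in f_x(V)\cap W$, so $f_x(V)\cap W\neq\emptyset$, i.e. $x\in (F^V)^-(W)$. Thus $O$ is a nonempty open subset of $U$ contained in $(F^V)^-(W)$, which is exactly what lower quasicontinuity of $F^V$ at $a$ demands. Since $U$ and $W$ were arbitrary, $F^V$ is lower quasicontinuous at $a$.

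This argument is essentially a direct translation between the two definitions, so there is no real obstacle — the only thing to be careful about is matching quantifiers correctly, in particular making sure the open set $W$ in the lower-quasicontinuity definition is used as the ``target neighborhood'' $W$ in the horizontal-quasicontinuity definition (legitimate because $W$ is a neighborhood of each of its points), and noting that the witness point $b\in V$ produced from $f_a(V)\cap W\neq\emptyset$ is exactly the point at which we invoke the hypothesis. The remark preceding the proposition (that quasicontinuity of $f^b$ at $a$ implies horizontal quasicontinuity at $(a,b)$) is not needed here but explains why the proposition is useful in practice.
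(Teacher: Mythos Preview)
Your proof is correct and is exactly the straightforward definition-chase the paper has in mind; the paper itself omits the argument, merely noting that the proposition is ``easy to check.'' There is nothing to add---your matching of the neighborhoods $U$, $V$, $W$ to the data required by horizontal quasicontinuity at the witness point $(a,b)$ is precisely the intended verification.
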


The converse  of Proposition 2.1 is false:  Let $f$ be the mapping  of ${\mathbb R}\times {\mathbb R}$ into the 
discrete space $\{0,1\}$ defined by $f(x,y)=1$ if and only if  $x-y\in\mathbb Q$;  then  for every nonempty open set  
$V\subset \mathbb R$, the  set-valued mapping   ${\mathbb R}\ni x\to f_x(V)\in 2^{\{0,1\}}$ is lower quasicontinuous; however, 
there is no point of ${\mathbb R}\times {\mathbb R}$  at which $f$ is  horizontally quasicontinuous. (Curiously enough,  both facts 
hold for the same  reason: For every $x\in\mathbb R$,  the sets  $x+\mathbb Q$ and $x+({\mathbb R}\setminus {\mathbb Q})$ 
are dense in $\mathbb R$.) 

\vskip 2mm

\begin{dfn} 
{\rm  We say that  $f: X\times Y\to Z$ is  {\it lower quasicontinuous with  respect to the variable $x$} 
({\it lower $X$-quasicontinuous}, for short) at the point $(a, b)\in X\times Y$ if for each  neighborhood $U$ of $a$ in $X$,  
each  neighborhood $V$ of $b$ in $Y$, and each neighborhood $W$ of $f(a,b)$ in  $Z$, there is an  open set $O\subset X$  such 
that   $\emptyset\not= O\subset U$  and $f(\{x\}\times V)\cap W\neq\emptyset$  for  every $x\in O$. Clearly,   ``$f: X\times Y\to Z$ 
 is  lower $X$-quasicontinuous''  means exactly that  for each nonempty open  set $V\subset Y$,   the set-valued mapping  
 $F^V:X\ni x\to f_x(V)\in 2^Z$ is lower  quasicontinuous.   On the other hand, let us point out  that  if  $f$ is horizontally 
quasicontinuous at $(a,b)$, then $f$  is   lower  quasicontinuous with respect to the variable $x$ at $(a,b)$. }
\end{dfn}

\vskip 2mm

The following proposition describes a wide class of mappings for which the results in Section 3 will apply (see Section 4).

\begin{proposition}
 Let  $V\subset Y$  be a  nonempty  open set, and suppose that   the mapping $f: X\times Y\to Z$ satisfies  one of the following:
\begin{itemize}
\item[{\rm ({\romannumeral 1})}] $f$ is vertically quasicontinuous at  every  point of $X\times V$,  and,  for each  $x\in X$,  there exists
 a dense subset $D_x$ of the space $V$ such that  $f$ is  lower $X$-quasicontinuous at every  point of $\{x\}\times D_x$.
\item[{\rm ({\romannumeral 2})}] $f$ is lower $Y$-quasicontinuous at every  point of $X\times V$,  and there exists a dense 
subset $D$ of  the space $V$ such that $f$ is  lower $X$-quasicontinuous at every  point of $X\times D$.
\end{itemize}
  Then  the set-valued mapping $F^V:X\ni x\to f_x(V)\in 2^Z$ is lower quasicontinuous.
\end{proposition}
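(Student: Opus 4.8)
The plan is to verify the definition of lower quasicontinuity for $F^V$ directly at an arbitrary point $a\in X$. So fix a neighbourhood $U$ of $a$ and an open set $W\subset Z$ with $f_a(V)\cap W=F^V(a)\cap W\neq\emptyset$; I must produce a nonempty open set $O\subset U$ with $f_x(V)\cap W\neq\emptyset$ for every $x\in O$. Shrinking $U$, I may and do assume that $U$ is open (finding $O$ inside a smaller open neighbourhood certainly suffices); this innocuous reduction is what will later allow an auxiliary point to lie in the interior of $U$. Since $f_a(V)\cap W\neq\emptyset$, choose $b\in V$ with $f(a,b)\in W$, and note that $V$ is a neighbourhood of $b$ while $W$ is an open neighbourhood of $f(a,b)$.

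Suppose $(\romannumeral 1)$ holds. Applying vertical quasicontinuity of $f$ at the point $(a,b)\in X\times V$ to the data $U$, $V$, $W$, I obtain a point $x_0\in U$ and a nonempty open set $O'\subset Y$ with $O'\subset V$ and $f(\{x_0\}\times O')\subset W$. By density of $D_{x_0}$ in $V$, pick $d\in D_{x_0}\cap O'$; then $f(x_0,d)\in W$ and, by hypothesis, $f$ is lower $X$-quasicontinuous at $(x_0,d)$. Feeding into this property the neighbourhood $U$ of $x_0$ (legitimate because $U$ is open and $x_0\in U$), the neighbourhood $V$ of $d$ (legitimate because $d\in O'\subset V$), and the neighbourhood $W$ of $f(x_0,d)$, I get a nonempty open set $O\subset U$ such that $f(\{x\}\times V)\cap W\neq\emptyset$, that is $f_x(V)\cap W\neq\emptyset$, for every $x\in O$. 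This is exactly what is required.

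Suppose instead $(\romannumeral 2)$ holds; the argument is the mirror image of the previous one, the first step now using the other slice direction. Lower $Y$-quasicontinuity of $f$ at $(a,b)$, applied to $U$, $V$, $W$, yields a nonempty open set $O'\subset Y$ with $O'\subset V$ and $f(U\times\{y\})\cap W\neq\emptyset$ for every $y\in O'$. Using density of $D$ in $V$, choose $d\in D\cap O'$; then some $x_0\in U$ satisfies $f(x_0,d)\in W$, and $f$ is lower $X$-quasicontinuous at $(x_0,d)$ because $d\in D$. Applying this last fact to $U$ (a neighbourhood of $x_0$), $V$ (a neighbourhood of $d$) and $W$ produces, exactly as before, a nonempty open set $O\subset U$ with $f_x(V)\cap W\neq\emptyset$ throughout $O$, which finishes the proof.

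The only point that requires care is the one already flagged: the auxiliary point $x_0$ coming out of vertical (resp. lower $Y$-) quasicontinuity is only guaranteed to belong to $U$, not to its interior, so lower $X$-quasicontinuity at $(x_0,d)$ cannot be invoked with $U$ playing the role of a neighbourhood of $x_0$ unless $U$ is open — whence the reduction made at the outset. Apart from this, the proof is a short two-step unwinding of the definitions, in which the dense sets $D_x$ (resp. $D$) serve only to transport the information ``$f(a,b)\in W$'' to a nearby point at which lower $X$-quasicontinuity is available.
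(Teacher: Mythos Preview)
Your proof is correct and follows essentially the same two-step scheme as the paper's proof: first use vertical quasicontinuity (resp.\ lower $Y$-quasicontinuity) to move to a point where lower $X$-quasicontinuity is available, then apply the latter to produce the desired open set $O$. The only cosmetic difference is that the paper works directly with an open $U$ meeting $(F^V)^-(W)$ (an equivalent formulation of lower quasicontinuity), which sidesteps the ``$U$ must be open so that it is a neighbourhood of the auxiliary $x_0$'' reduction you carefully flag.
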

\begin{proof}
Let $U\subset X$,  $W\subset Z$ be    nonempty open sets  such that $U$ meets $(F^V)^-(W)$, and let us 
show that there  is a nonempty open set  in $X$ contained in  $U$ and $(F^V)^-(W)$.   Let us choose an arbitrary point
 $(x_0,y_0)$ in $(U\times V)\cap f^{-1}(W)$. In case ({\romannumeral 1}), there is a nonempty open set
$V_0\subset V$ and $x_1\in U$ such that $f(\{x_1\}\times V_0)\subset W$; taking $y_1\in V_0\cap D_{x_1}$,
one can find a nonempty open set $U_0\subset U$ such that  $f(\{x\}\times V)\cap W\neq\emptyset$  for 
every $x\in U_0$. In case ({\romannumeral 2}), there is
a nonempty open set $V_1\subset V$ such that $f(U\times \{y\})\cap W\neq\emptyset$ for 
every $y\in V_1$.  Take  $y_2\in V_1\cap D$ and choose  $x_2\in U$ such that   $f(x_2,y_2)\in W$; then 
 there is a nonempty  open set $U_1\subset U$ such that $f(\{x\}\times V)\cap W\neq\emptyset$  for 
every $x\in U_1$. 
 \end{proof}

\vskip 2mm

The next statement in terms of the familiar  concept of quasicontinuity follows from  2.3; it will be 
used in Sections 3 and 4   to derive  several  Hahn or Kempisty type results  from  Theorem 3.3.

\begin{cor} Suppose  that  for each  $x\in X$,  $f_x$  is quasicontinuous  and there is a dense set $D_x\subset Y$
such that $f^y$ is quasicontinuous at $x$ for every $y\in D_x$.   Then 
  $f: X\times Y\to Z$   is lower $X$-quasicontinuous.
\end{cor}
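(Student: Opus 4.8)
The plan is to derive Corollary 2.5 from Proposition 2.3 by checking that the hypotheses of case~({\romannumeral 1}) are met with $V=Y$. First I would observe that quasicontinuity of each partial map $f_x:Y\to Z$ is exactly what is needed for the vertical quasicontinuity of $f$ at every point of $X\times Y$: indeed, given $(x,y)$, a neighborhood $V'$ of $y$, and a neighborhood $W$ of $f(x,y)$, quasicontinuity of $f_x$ at $y$ yields a nonempty open $O'\subset V'$ with $f_x(O')\subset W$, i.e. $f(\{x\}\times O')\subset W$, which is precisely vertical quasicontinuity at $(x,y)$.

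Next I would produce the dense sets. For a fixed $x\in X$, take $D_x\subset Y$ dense such that $f^y$ is quasicontinuous at $x$ for every $y\in D_x$; this is given by hypothesis. I claim that $f$ is lower $X$-quasicontinuous at every point of $\{x\}\times D_x$. Fix $y\in D_x$, a neighborhood $U$ of $x$, a neighborhood $V'$ of $y$, and a neighborhood $W$ of $f(x,y)$. Since $f^y$ is quasicontinuous at $x$, there is a nonempty open $O\subset U$ with $f^y(O)\subset W$, that is, $f(x',y)\in W$ for every $x'\in O$; since $y\in V'$, this gives $f(\{x'\}\times V')\cap W\neq\emptyset$ for every $x'\in O$, which is exactly the lower $X$-quasicontinuity of $f$ at $(x,y)$. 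Thus with $V=Y$ the two conditions of case~({\romannumeral 1}) hold, so Proposition 2.3 applies and $F^Y:X\ni x\to f_x(Y)\in 2^Z$ is lower quasicontinuous.

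Finally I would upgrade from $V=Y$ to an arbitrary nonempty open $V\subset Y$, which is what the definition of lower $X$-quasicontinuity of $f$ requires (``for each nonempty open $V\subset Y$, $F^V$ is lower quasicontinuous''). The point is that the same argument goes through verbatim with $V$ in place of $Y$: each $f_x$ restricted suitably is still quasicontinuous (quasicontinuity of $f_x$ on $Y$ delivers, for any neighborhood contained in $V$, a nonempty open subset on which $f_x$ maps into $W$), and $D_x\cap V$ is dense in $V$ with $f^y$ quasicontinuous at $x$ for each of its points, yielding lower $X$-quasicontinuity of $f$ at the points of $\{x\}\times(D_x\cap V)$. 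Hence case~({\romannumeral 1}) of Proposition 2.3 holds for every such $V$, giving lower quasicontinuity of every $F^V$ and therefore lower $X$-quasicontinuity of $f$.

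I do not expect a serious obstacle here; the only mild care needed is the bookkeeping in the last paragraph, making sure that restricting attention to a subset $V$ does not spoil density of $D_x\cap V$ in $V$ (it does not, since $D_x$ is dense in $Y$ and $V$ is open) and that the ``nonempty open subset'' produced by quasicontinuity can be taken inside $V$ (it can, by first intersecting the given neighborhood with $V$). Everything else is an unwinding of the definitions of quasicontinuity, vertical quasicontinuity, and lower $X$-quasicontinuity.
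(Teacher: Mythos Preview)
Your proposal is correct and follows the same route the paper intends: the paper simply says the corollary ``follows from 2.3,'' and you have supplied exactly the verification that case~({\romannumeral 1}) of Proposition~2.3 holds for every nonempty open $V\subset Y$. The detour through $V=Y$ before treating general $V$ is unnecessary (the argument for arbitrary $V$ already covers it), but it does no harm.
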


\begin{remark}{\rm (1)  It is easy to see that the  mapping $f: X\times Y\to Z$ is lower quasicontinuous with
respect to the variable $x$   at   $(a,b)\in X\times Y$  if and only if    $f(a,b)\in \overline{f((U\cap A)\times V)}$
 for any  neighborhood $U$ of $a$ in $X$,   any  neighborhood $V$ of $b$ in $Y$,  and any dense subset $A$ of $X$.  \par

(2)  Let  $\tau_f$ be the topology on $X\times Y$  generated   by $f: X\times Y\to Z$  and   the two projections 
 $X\times Y\ni (x,y) \to x\in X$,  $X\times Y\ni (x,y) \to y\in Y$.  Then, as the proof of Proposition 2.3 shows, the set 
 $Q$ of all $(x,y)\in X\times Y$ at which $f$ is lower  $X$-quasicontinuous is closed with respect to $\tau_f$.  
 Thus, any condition ensuring that $Q$ is $\tau_f$-dense  in $X\times Y$ will imply that $f$  is lower 
$X$-quasicontinuous; for instance, the  conditions ({\romannumeral 1}) and ({\romannumeral 2}) 
in Proposition 2.3 (when satisfied for all nonempty open subsets  of $Y$)  are of this sort. }
\end{remark}

\section{joint continuity and related concepts}

In order to state our main result in 3.3 below, let us recall some  definitions concerning the concept of
quasicontinuity and that,  weaker,  of cliquishness.   Let $X$ and  $Y$ be two    topological  spaces  and let $(Z, d)$ 
be  a metric space.   A  mapping $g$ of   $X$  into   $(Z, d)$  is said to be  {\it cliquish} at   $a\in X$ if for 
each  $\varepsilon > 0$  and each  neighborhood $U$ of $a$  in $X$ there is an  open set $O\subset X$
  such that  $\emptyset\not= O\subset U$ and   ${\rm diam} (g(O))
\leq\varepsilon$ \cite{Th,Ma}. (If 
$W\in 2^Z$,  then  ${\rm diam} (W) = \sup\{ d(u, v) : u, v\in W\}$.)   A   mapping $f$ of  $X\times Y$ into    $(Z, d)$ 
 is said to be  {\it  quasicontinuous} ({\it cliquish})  {\it with respect to the variable} $x$ at  $(a,b)\in X\times Y$  if  
 for  each   neighborhood $V$ of $b$ in $Y$ and  each  $\varepsilon > 0$, there is a neighborhood  $U$ of $a$ in $X$ 
and an  open set $O\subset Y$  such that   $\emptyset\not= O\subset V$ and $d(f(a, b), f(x, y)) \leq \varepsilon$ for 
all  $x\in U$, $y\in O$  \cite{M,P1}  (respectively,  and
$d(f(a,y),f(x,y^{\prime})) \leq\varepsilon$ for all 
$x\in U$, $y, y^{\prime}\in O$ \cite{F}).    In relation to Theorem 4.4 below, it is worth noticing  that {\it  if  $a\in X$ 
and if  $f$ is cliquish with respect to the  variable $x$ at   $(a,y)\in X\times Y$  for  every $y\in Y$, then  the set 
of all $y\in Y$ such  that $f$ is continuous at $(a,y)$ is residual in $Y$} \cite{F}.   \par

Recall  also that  a collection $\mathcal B$ of nonempty open sets in  a topological space  is called a  {\it pseudobase} 
(or  {\it $\pi$-base}) for this space if  any  nonempty open set    contains some member of $\mathcal B$ \cite{O}.

\begin{lemma} 
Let $X$ be a  topological  space   and let $(Z, d)$ be a bounded  metric space.
Let $F$ be a lower quasicontinuous set-valued mapping of  $X$ into  $2^Z$. Let $z_0\in Z$.  Then the  
mapping $\phi : X\rightarrow \mathbb R$ defined by  $\phi(x) = \sup\{d(z_0, z) :
z\in F(x)\}$ {\rm (}$x\in X${\rm )} is  cliquish.
\end{lemma}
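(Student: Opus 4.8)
The plan is to show that $\phi$ is cliquish at an arbitrary point $a\in X$, i.e.\ that for every $\varepsilon>0$ and every neighborhood $U$ of $a$ there is a nonempty open set $O\subset U$ on which $\phi$ oscillates by at most $\varepsilon$. The idea is to use lower quasicontinuity of $F$ to trap the values $\phi(x)$, $x\in O$, between two close bounds. First I would observe that $\phi$ is bounded, since $d$ is bounded; let $M=\sup_{x\in U}\phi(x)$ (a finite real number, taking the sup over the given neighborhood $U$). Choose a point $x_0\in U$ with $\phi(x_0)>M-\varepsilon/3$, and then pick $z\in F(x_0)$ with $d(z_0,z)>M-\varepsilon/3$ as well (possible by definition of $\phi(x_0)$ as a supremum).

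The key step is now to feed this into lower quasicontinuity. Consider the open set $W=\{z'\in Z: d(z_0,z')>M-\varepsilon/3\}$; since $F(x_0)\cap W\ni z$ is nonempty and $U$ is a neighborhood of $x_0$, lower quasicontinuity of $F$ at $x_0$ yields a nonempty open set $O\subset U$ with $F(x)\cap W\neq\emptyset$ for every $x\in O$. For such $x$ this gives $\phi(x)\geq \sup\{d(z_0,z'):z'\in F(x)\cap W\}\geq M-\varepsilon/3$. On the other hand $\phi(x)\leq M$ for all $x\in O\subset U$ by the definition of $M$. Hence $\operatorname{diam}(\phi(O))\leq M-(M-\varepsilon/3)=\varepsilon/3\leq\varepsilon$, which is exactly what cliquishness requires.

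I do not expect a serious obstacle here; the only points that need a little care are that $\phi$ is genuinely $\mathbb R$-valued (guaranteed by boundedness of $(Z,d)$, so that $M<\infty$), that $W$ is indeed open (it is the preimage of an open half-line under the continuous map $z'\mapsto d(z_0,z')$), and that lower quasicontinuity is being applied with the neighborhood $U$ of $x_0$ rather than of $a$—but $U$ is a neighborhood of every point it contains when $U$ is open, and in general one may first shrink $U$ to an open neighborhood of $a$ at the outset without loss of generality. Since $a$, $\varepsilon$ and $U$ were arbitrary, $\phi$ is cliquish on all of $X$.
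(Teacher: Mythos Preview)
Your proof is correct and follows essentially the same approach as the paper's: take $M=\sup_{x\in U}\phi(x)$, pick $x_0\in U$ with $\phi(x_0)$ close to $M$ and a witness $z\in F(x_0)$, then apply lower quasicontinuity to obtain a nonempty open $O\subset U$ on which $\phi$ is trapped between $M-\varepsilon$ and $M$. The only cosmetic difference is the choice of the open set fed into lower quasicontinuity: the paper uses a small ball $B(z_1,\rho)$ around the witness point and recovers the lower bound via the triangle inequality, whereas you use the set $\{z':d(z_0,z')>M-\varepsilon/3\}$ directly, which is marginally cleaner; your remark about first shrinking $U$ to an open neighborhood so that it is a neighborhood of $x_0$ is also handled correctly.
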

\begin{proof}   Let  $U$ be a nonempty open subset of the space $X$, and let $\varepsilon  > 0$. 
Let us put $r = \sup\{ \phi(x) :  x\in U\} - \varepsilon$ and let us choose $x_0\in U$  such that $r < \phi(x_0)$.
There is $z_1\in F(x_0)$ such that $r <  d(z_0, z_1)$; let us put $\rho = d(z_0, z_1) - r$. Let $O$ be  a nonempty open 
subset of $X$ contained in $U\cap F^-( B(z_1, \rho))$;  then $r < \phi(x) \leq r + \varepsilon$ for every $x\in O$.         
\end{proof}

For a set-valued mapping $F:X\to 2^Z$ and $U\subset X$, let $F(U)$  denote the set $\cup_{x\in U}F(x)$.
\begin{lemma} 
Let $X$ be a  topological  space   and let $(Z, d)$  be a  bounded metric space.
Let $F$ be a lower quasicontinuous set-valued mapping of $X$ into  $2^Z$.  Then 
$$A = \{x\in X :  \forall\varepsilon > 0, \exists U\in {\mathcal V}_X(x), {\rm diam} (F(U)) \leq 2 {\rm diam} (F(x)) + \varepsilon\}$$
is a residual subset of $X$.
\end{lemma}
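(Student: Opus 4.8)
\emph{Proof plan.} For $\varepsilon>0$ put
$$A_\varepsilon=\bigl\{x\in X:\ \exists\,U\in{\mathcal V}_X(x),\ {\rm diam}(F(U))\le 2\,{\rm diam}(F(x))+\varepsilon\bigr\},$$
so that $A=\bigcap_{n\ge 1}A_{1/n}$. The plan is to show that ${\rm int}(A_\varepsilon)$ is dense in $X$ for every $\varepsilon>0$; then each $X\setminus{\rm int}(A_{1/n})$ is a closed set with dense complement, hence nowhere dense, so $X\setminus A$ is meager and $A$ is residual. To prove that ${\rm int}(A_\varepsilon)$ is dense I would check that every nonempty open set $U\subset X$ contains a nonempty open set $O$ with ${\rm diam}(F(O))\le 2\,{\rm diam}(F(x))+\varepsilon$ for all $x\in O$; such an $O$ is a neighborhood of each of its points, hence $O\subset{\rm int}(A_\varepsilon)$.

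So let $U\subset X$ be nonempty open and $\varepsilon>0$. I would pick an arbitrary $x_0\in U$ and $z_0\in F(x_0)$, and consider the function $\phi\colon X\to{\mathbb R}$, $\phi(x)=\sup\{d(z_0,z):z\in F(x)\}$, which is cliquish by Lemma 3.1 (here the boundedness of $(Z,d)$ is used). First, lower quasicontinuity of $F$ at $x_0$, applied to the neighborhood $U$ of $x_0$ and to the open set $B(z_0,\varepsilon/4)$ (which meets $F(x_0)$ at $z_0$), yields a nonempty open set $O_1\subset U$ with $F(x)\cap B(z_0,\varepsilon/4)\neq\emptyset$ for every $x\in O_1$. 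Then, since $\phi$ is cliquish, I would choose a nonempty open set $O\subset O_1$ with ${\rm diam}(\phi(O))\le\varepsilon/4$.

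It remains to see that $O$ has the required property. For $x\in O$, picking $p_x\in F(x)\cap B(z_0,\varepsilon/4)$, one has for every $z\in F(x)$ that $d(z_0,z)\le d(z_0,p_x)+d(p_x,z)<\varepsilon/4+{\rm diam}(F(x))$, hence $\phi(x)\le{\rm diam}(F(x))+\varepsilon/4$. On the other hand, for $x,x'\in O$, $z\in F(x)$ and $z'\in F(x')$ one has $d(z,z')\le d(z,z_0)+d(z_0,z')\le\phi(x)+\phi(x')\le 2\sup_O\phi$, so ${\rm diam}(F(O))\le 2\sup_O\phi$. Since ${\rm diam}(\phi(O))\le\varepsilon/4$ gives $\sup_O\phi\le\phi(x)+\varepsilon/4$ for each $x\in O$, we obtain, for every $x\in O$,
$${\rm diam}(F(O))\le 2\sup_O\phi\le 2\bigl(\phi(x)+\varepsilon/4\bigr)\le 2\,{\rm diam}(F(x))+\varepsilon.$$

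The crux — and the reason for the factor $2$ and for invoking Lemma 3.1 rather than arguing with $x\mapsto{\rm diam}(F(x))$ directly — is that this last map need not be cliquish: lower quasicontinuity controls the set $F(x)$ from below near a point but not from above, so ${\rm diam}(F(O))$ cannot in general be made close to ${\rm diam}(F(x))$. The remedy is to bound ${\rm diam}(F(O))$ through the cliquish functions $\phi$ attached to points $z_0\in F(x_0)$, the factor $2$ being precisely the loss in the triangle inequality $d(z,z')\le\phi(x)+\phi(x')$; managing the interplay between the lower-quasicontinuity step (which produces $O_1$) and the cliquishness step (which produces $O$) is the only genuine point of the argument.
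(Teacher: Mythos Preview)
Your proof is correct and follows essentially the same route as the paper: define the $A_\varepsilon$'s, then inside an arbitrary nonempty open set use lower quasicontinuity to pass to a subopen set where every $F(x)$ meets $B(z_0,\varepsilon/4)$, apply Lemma~3.1 to get a further subopen set on which $\phi$ oscillates by at most $\varepsilon/4$, and conclude via the triangle-inequality estimate ${\rm diam}(F(O))\le 2\phi(x)+\varepsilon/2\le 2\,{\rm diam}(F(x))+\varepsilon$. The only cosmetic difference is that the paper defines $A_\varepsilon$ directly as the union of the ``good'' open sets (so it is open by construction) rather than taking the interior of your pointwise-defined $A_\varepsilon$, but the content is identical.
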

\begin{proof}  Let $\varepsilon  > 0$, and let $A_{\varepsilon}$ be the union of all   open subsets $U$ of $X$ such
 that ${\rm diam} (F(U)) \leq 2{\rm diam} (F(x)) + \varepsilon$ for every $x\in U$.
Let us show that the open subset $A_{\varepsilon}$ of $X$ is dense in $X$;  
this will prove the lemma since  $\bigcap_{n\in{\mathbb N}} A_{{1}/{n + 1}}\subset A$.  
 Let $O$ be a nonempty open subset of $X$. Then, taking $x_0\in O$ and $z_0\in F(x_0)$, one can find a 
nonempty open subset $O^{\prime}$ of $X$ contained in $O\cap F^-(B(z_0,{\varepsilon}/4))$. 
 Let $\phi$ be the real-valued  mapping  on  $X$ defined  by  $\phi(x) = \sup\{d(z_0, z) \mid z\in F(x)\}$ for every $x\in X$. 
By Lemma 3.1,   there is a nonempty open set $O^{{\prime}{\prime}}\subset O^{\prime}$ 
such that  ${\rm diam} (\phi(O^{{\prime}{\prime}})) \leq {\varepsilon}/{4}$. Let us verify that  
$O^{{\prime}{\prime}}\subset A_{\varepsilon}$,   which will imply that $O\cap  A_{\varepsilon} \neq \emptyset$ 
since $O^{{\prime}{\prime}}\subset O$.  Let $x^{{\prime}{\prime}}\in O^{{\prime}{\prime}}$.  Let $z_i\in F(O^{{\prime}{\prime}})$, 
and let   $x_i\in O^{{\prime}{\prime}}$ such that $z_i\in F(x_i)$ ($i= 1, 2$); then
$$d(z_1, z_2) \leq d(z_1, z_0) + d(z_0, z_2) \leq\phi(x_1) + \phi(x_2) \leq 2\phi(x^{{\prime}{\prime}}) + {\varepsilon}/{2};$$
 consequently   ${\rm diam} (F(O^{{\prime}{\prime}})) \leq 2\phi(x^{{\prime}{\prime}}) + {\varepsilon}/{2}$.
Now, let  $z^{{\prime}{\prime}}\in F(x^{{\prime}{\prime}})\cap B(z_0, {\varepsilon}/{4})$; then
$$\sup\{d(z_0, z) : z\in F(x^{{\prime}{\prime}})\} 
\leq d(z_0, z^{{\prime}{\prime}}) + \sup\{ d(z^{{\prime}{\prime}}, z) : z\in F(x^{{\prime}{\prime}})\},$$
and consequently  $\phi(x^{{\prime}{\prime}})  \leq {\varepsilon}/{4} + {\rm diam} (F(x^{{\prime}{\prime}}))$. 
 Since  $O^{{\prime}{\prime}}$ is a nonempty open subset of  $X$ such that 
${\rm diam} (F(O^{{\prime}{\prime}})) \leq 2{\rm diam} (F(x^{{\prime}{\prime}})) + {\varepsilon}$ for every
$x^{{\prime}{\prime}}\in O^{{\prime}{\prime}}$, the inclusion  $O^{{\prime}{\prime}}\subset A_{\varepsilon}$ holds.  
 \end{proof}  

\vskip 2mm

\begin{theo} 
Let  $X$ and  $Y$  be  topological spaces,   let $(Z, d)$  be a metric space,  and   let $f : X\times Y\rightarrow  Z$  be a mapping.  
Let ${\mathcal B}$ be a countable collection of nonempty  subsets of $Y$. Let us suppose  that,  for every $V\in {\mathcal B}$,  
 the set-valued mapping $F^V:X\ni x\to f_x(V)\in 2^Z$ is lower quasicontinuous.  
Then  there is a residual set $R\subset X$ such that for each  $a\in R$:
\begin{itemize}
\item[{\rm (1)}]   If  $f_a$ is continuous at $b\in Y$  and if  $b$ has a neighborhood base contained in ${\mathcal B}$, 
then $f$ is continuous at $(a, b)$.
\item[{\rm (2)}]  If  $f_a$ is quasicontinuous at $b\in Y$ and   if some  neighborhood  of $b$ in $Y$ has a pseudobase contained in   
${\mathcal B}$,  then $f$  is  quasicontinuous with respect to the variable $x$ at $(a, b)$.
\item[{\rm (3)}]  If  $f_a$ is cliquish at $b\in Y$  and   if some  neighborhood  of $b$ in $Y$ has a pseudobase contained in   
${\mathcal B}$,  then   $f$  is cliquish with respect to the variable $x$ at $(a, b)$.
\end{itemize}
\end{theo}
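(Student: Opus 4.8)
The plan is to reduce to a bounded target metric, apply Lemma 3.2 to each of the countably many set-valued mappings $F^V$, $V\in\mathcal B$, and take $R$ to be the intersection of the residual sets so obtained. First I would note that one may assume $d$ is bounded: replacing $d$ by the topologically equivalent metric $\min(d,1)$ changes neither the hypothesis (lower quasicontinuity of $F^V$ depends only on the topology of $Z$ and on the sets $f_x(V)$, not on the particular metric) nor any of the three conclusions (joint continuity is topological, and the $\varepsilon$-formulations in (2) and (3) need only be checked for small $\varepsilon$). Then, for each $V\in\mathcal B$, Lemma 3.2 (applicable since $F^V$ is lower quasicontinuous) supplies a residual set $A_V\subset X$ such that for every $a\in A_V$ and every $\varepsilon>0$ there is $U\in\mathcal V_X(a)$ with ${\rm diam}(F^V(U))\le 2\,{\rm diam}(F^V(a))+\varepsilon$; here $F^V(U)=\bigcup_{x\in U}f_x(V)=f(U\times V)$ and $F^V(a)=f_a(V)$. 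Since $\mathcal B$ is countable, $R:=\bigcap_{V\in\mathcal B}A_V$ is residual in $X$, and I would fix $a\in R$ for the remainder.

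All three assertions would then follow from a single device: starting from the local behaviour of $f_a$ near $b$, extract a set $V\in\mathcal B$ that is open in $Y$ and on which ${\rm diam}(f_a(V))$ is as small as desired, feed $V$ into the defining property of $A_V$ to obtain $U\in\mathcal V_X(a)$ with ${\rm diam}(f(U\times V))\le 2\,{\rm diam}(f_a(V))+\varepsilon$, and read off the conclusion. For (1): given $\varepsilon>0$, continuity of $f_a$ at $b$ yields a neighborhood of $b$ on which $f_a$ has oscillation at most $\varepsilon$; shrinking it to a member $V\in\mathcal B$ of the neighborhood base at $b$, I get ${\rm diam}(f_a(V))\le\varepsilon$, hence ${\rm diam}(f(U\times V))\le 3\varepsilon$ for a suitable $U\in\mathcal V_X(a)$. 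As $f(a,b)\in f(U\times V)$ and $U\times V\in\mathcal V_{X\times Y}((a,b))$, this sends all of $U\times V$ into the ball of radius $3\varepsilon$ about $f(a,b)$, and letting $\varepsilon\to 0$ gives continuity of $f$ at $(a,b)$.

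For (2) and (3) I would fix a neighborhood $N$ of $b$ carrying a pseudobase $\mathcal P\subset\mathcal B$, take a neighborhood $V_0$ of $b$ and $\varepsilon>0$, and replace $V_0$ by $V_0\cap N$ so that $V_0\subset N$. In case (2), quasicontinuity of $f_a$ at $b$ gives a nonempty open $O'\subset V_0$ with $f_a(O')\subset B(f(a,b),\varepsilon/6)$, whence ${\rm diam}(f_a(O'))\le\varepsilon/3$; being a nonempty open subset of the subspace $N$, $O'$ contains some $P\in\mathcal P\subset\mathcal B$, and $P$ is then open in $Y$. Now $a\in A_P$ furnishes $U\in\mathcal V_X(a)$ with ${\rm diam}(f(U\times P))\le 2\,{\rm diam}(f_a(P))+\varepsilon/6\le 5\varepsilon/6$, so, choosing $y_0\in P$, for all $x\in U$ and $y\in P$ one has $d(f(a,b),f(x,y))\le d(f(a,b),f(a,y_0))+{\rm diam}(f(U\times P))\le\varepsilon/6+5\varepsilon/6=\varepsilon$; with $O:=P$ this is quasicontinuity with respect to $x$ at $(a,b)$. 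In case (3), cliquishness of $f_a$ at $b$ gives a nonempty open $O'\subset V_0$ with ${\rm diam}(f_a(O'))\le\varepsilon/3$; picking $P\in\mathcal P\subset\mathcal B$ with $\emptyset\ne P\subset O'$ and then $U\in\mathcal V_X(a)$ from $a\in A_P$ with ${\rm diam}(f(U\times P))\le 2\,{\rm diam}(f_a(P))+\varepsilon/3\le\varepsilon$, I would observe that for $x\in U$ and $y,y'\in P$ both $f(a,y)$ and $f(x,y')$ lie in $f(U\times P)$, so $d(f(a,y),f(x,y'))\le\varepsilon$; with $O:=P$ this is cliquishness with respect to $x$ at $(a,b)$.

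The only slightly delicate point — and the nearest thing to an obstacle — is the passage from a pseudobase of $N$ to a member of $\mathcal B$ that is open in the whole space $Y$: a nonempty $O'$ open in $Y$ with $O'\subset N$ is nonempty and open in the subspace $N$, hence contains some $P\in\mathcal P$; and $P$, being open in $N$ while contained in the $Y$-open set $O'$, is open in $Y$. Beyond that the argument is routine juggling of the constant $2$ furnished by Lemma 3.2, and the passage to a bounded metric at the start.
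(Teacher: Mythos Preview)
Your proof is correct and follows essentially the same route as the paper's: reduce to a bounded metric, apply Lemma~3.2 to each $F^V$ with $V\in\mathcal B$, intersect the resulting residual sets, and then in each of (1)--(3) locate a suitable $V\in\mathcal B$ on which ${\rm diam}(f_a(V))$ is small and invoke the inequality ${\rm diam}(f(U\times V))\le 2\,{\rm diam}(f_a(V))+\varepsilon$. The differences from the paper are cosmetic (your constants $\varepsilon/3,\varepsilon/6$ versus the paper's $\varepsilon/4,\varepsilon/6$; in (2) you route the triangle inequality through a fixed $y_0\in P$ whereas the paper uses the running $y$), and your final paragraph spells out explicitly the same openness-in-$Y$ argument the paper uses implicitly when it writes ``$O\in\mathcal B$ contained in $O'$ and open in $V'$; $O$ is a nonempty open set in $Y$.''
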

\begin{proof} One can assume that $(Z, d)$ is bounded. By Lemma 3.2, for every $V\in{\mathcal B}$,  the set
$$R_V = \{x\in X : \forall\varepsilon > 0, \exists U\in {\mathcal V}_X(x), {\rm diam} (F^V(U)) \leq 2 {\rm diam} (F^V(x)) + \varepsilon\}$$
is a residual subset of $X$. Let us put $R = \bigcap_{V\in {\mathcal B}} R_V$;  since $R$ is the intersection of a countable
family of residual subsets of $X$, $R$ is a residual subset of $X$. Let us consider $a\in R$.  \par
(1) Let $b\in Y$ such that  $f_a$ is continuous at $b$ and  ${\mathcal B}$ contains  a neighborhood base at  $b$. 
Let $\varepsilon > 0$, and let ${V\in {\mathcal B}}$ be a neighborhood of $b$ in $Y$ such that  
 ${\rm diam} (f_a(V)) \leq {\varepsilon}/4$.  Since $a$ belongs to $R$, there  exists $U\in {\mathcal V}_X(a)$  
such that   ${\rm diam} (F^V(U)) \leq 2 {\rm diam} (F^V(a)) + {\varepsilon}/{2}$. 
Since $U\times V$ is a neighborhood of $(a, b)$  in $X\times Y$,  and since
$${\rm diam}(f(U\times V)) = {\rm diam} (F^V(U)) \leq  2 {\rm diam} (f_a(V)) + \varepsilon/2\leq    \varepsilon,$$
 $f$ is continuous at $(a, b)$.  \par
(2)  Let us suppose  that  $f_a$  is quasicontinuous at $b\in Y$, and let us suppose  that     ${\mathcal B}$  contains a pseudobase     
for some  neighborhood  $V^{\prime}$ of $b$ in $Y$.   Let $V\in{\mathcal V}_Y(b)$ and $\varepsilon > 0$.  
There is    a nonempty open set $O^{\prime}$ in  $Y$  such that $O^{\prime}\subset V\cap V^{\prime}$
and  $ f_a(O^{\prime})\subset B(f_a(b), {\varepsilon}/6)$.    Let   ${O\in {\mathcal B}}$   
contained in  $O^{\prime}$ and open  in $V^{\prime}$; $O$ is a nonempty  open set in $Y$ contained in $V$. 
  Since $a\in R$, there exists   $U\in {\mathcal V}_X(a)$  such that 
${\rm diam} (F^O(U)) \leq 2 {\rm diam} (F^O(a)) + {\varepsilon}/{6}$. Now, since  for any $(x, y)\in U\times O$,
$$d(f(a, b), f(x, y))\leq d(f_a(b), f_a(y)) + d(f_a(y), f_x(y))\leq  {\varepsilon}/{6} + {\rm diam} (F^O(U) \leq    \varepsilon,$$ 
statement  (2) is proved.  \par
(3)  Suppose that  $f_a$  is cliquish  at $b\in Y$  and  that     ${\mathcal B}$  contains a pseudobase     
for some  neighborhood  $V^{\prime}$ of $b$ in $Y$.   Let  $V\in{\mathcal V}_Y(b)$ and  $\varepsilon > 0$.  
 There is a nonempty open set $O^{\prime}$ in $Y$  such that   $O^{\prime}\subset  V\cap V^{\prime}$
and   ${\rm diam}( f_a(O^{\prime})) \leq {\varepsilon}/4$.   Let   ${O\in {\mathcal B}}$ 
contained in  $O^{\prime}$ and open in $V^{\prime}$;  $O$ is a nonempty  open set in $Y$ 
contained in $V$.  Since $a\in R$, there is $U\in {\mathcal V}_X(a)$
such that ${\rm diam} (F^O(U)) \leq 2 {\rm diam} (F^O(a)) + {\varepsilon}/{2}$. It follows from what preceeds that
$${\rm diam}(f(U\times O)) = {\rm diam} (F^O(U)) \leq   2 {\rm diam} (f_a(O)) + {\varepsilon}/{2}\leq    \varepsilon,$$      
which establishes  statement  (3).
\end{proof}

\vskip 2mm

\begin{remark} 
{\rm  In    point  (1)  of  Theorem 3.3,  the topology on   $Y$  is  quite  irrelevant    inasmuch as
the neighborhood of $b$ used  to establish the continuity of   $f$ at  $(a,b)$  belongs to the collection $\mathcal B$.  
 Similar  remarks  hold  for  point  (2)  and  point (3)  of   that  same theorem.}
\end{remark}

In view of    3.3  (and  2.4, for   Corollary 3.7), we can state:

\begin{cor} Let  $X$ and  $Y$ be  topological spaces,  let $Z$  be a metric space,  and  let $f: X\times Y\to Z$ be  a mapping. 
 Let us suppose that   $Y$ has a countable base and  that for every  $y\in Y$, the mapping $f^y$
is quasicontinuous. Then   there  is a residual set $R\subset X$ such that  for  every $(a,b)\in R\times Y$, 
$f$ is continuous  at $(a,b)$ provided that  $f_a$ is continuous  at $b$.
\end{cor}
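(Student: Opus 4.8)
The plan is to deduce Corollary 3.7 from Theorem 3.3(1), with Corollary 2.4 supplying the needed hypothesis on the set-valued mappings $F^V$. First I would fix a countable base $\mathcal B$ for $Y$ consisting of nonempty open sets; this is possible since $Y$ has a countable base (and we may discard the empty set). The goal is then to check the hypothesis of Theorem 3.3, namely that for every $V\in\mathcal B$ the set-valued mapping $F^V:X\ni x\to f_x(V)\in 2^Z$ is lower quasicontinuous.

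To verify this, I would invoke Corollary 2.4. Its hypotheses are: for each $x\in X$, $f_x$ is quasicontinuous, and there is a dense set $D_x\subset Y$ with $f^y$ quasicontinuous at $x$ for every $y\in D_x$. In our situation $f^y$ is quasicontinuous (everywhere on $X$) for \emph{every} $y\in Y$, so we may take $D_x=Y$ for each $x$; the second hypothesis is thus automatic. For the first hypothesis I must argue that each $f_x$ is quasicontinuous — but this does not follow from the stated assumptions, so here I suspect a slight gap and would instead proceed as follows: apply Corollary 2.4 not to $f$ on $X\times Y$ but rather, for a fixed $V\in\mathcal B$, exploit that $f$ is lower $X$-quasicontinuous at suitable points. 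Actually the cleaner route is to observe that Corollary 2.4 as stated concludes ``$f$ is lower $X$-quasicontinuous'', i.e. $F^V$ is lower quasicontinuous for \emph{every} nonempty open $V\subset Y$; so if its hypotheses held we would be done immediately. Since only ``$f^y$ quasicontinuous for all $y$'' is given, the intended reading is presumably that this alone, via Proposition 2.3(ii) with $D=Y$ (noting $f^y$ quasicontinuous at every point forces, together with density arguments, lower $Y$-quasicontinuity and lower $X$-quasicontinuity on a dense set), yields lower quasicontinuity of each $F^V$. So the key step is: \emph{from $f^y$ quasicontinuous for every $y\in Y$, derive that $F^V$ is lower quasicontinuous for every open $V\subset Y$}, which I would establish by checking the definition directly — given $U\subset X$ open and $W\subset Z$ open with $(F^V)^-(W)\cap U\neq\emptyset$, pick $(x_0,y_0)\in (U\times V)\cap f^{-1}(W)$, and use quasicontinuity of $f^{y_0}$ at $x_0$ to find a nonempty open $U_0\subset U$ with $f^{y_0}(U_0)\subset W$, whence $F^V(x)\cap W\ni f(x,y_0)$ for all $x\in U_0$.

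Once lower quasicontinuity of every $F^V$, $V\in\mathcal B$, is in hand, I would apply Theorem 3.3 to obtain a residual set $R\subset X$ satisfying (1)--(3). For point (1), note that since $\mathcal B$ is a base for the whole space $Y$, every $b\in Y$ has a neighborhood base contained in $\mathcal B$; therefore, for every $a\in R$ and every $b\in Y$ such that $f_a$ is continuous at $b$, Theorem 3.3(1) gives that $f$ is continuous at $(a,b)$. This is exactly the assertion of Corollary 3.7.

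The main obstacle I anticipate is the one flagged above: reconciling the stated hypothesis (only $f^y$ quasicontinuous, nothing on $f_x$) with the apparatus of Section 2, which in Corollary 2.4 also demands quasicontinuity of the $f_x$. The resolution is that for the \emph{lower} quasicontinuity of $F^V$ one genuinely needs nothing about the partial maps $f_x$; a direct argument from quasicontinuity of the $f^y$ suffices, and the invocation of ``2.4'' in the remark preceding Corollary 3.7 should be understood as shorthand for this direct verification (equivalently, Proposition 2.3(ii) applied with $D=Y$, where lower $Y$-quasicontinuity at each point follows from quasicontinuity of the relevant $f^y$). Modulo that reading, the proof is a routine two-line deduction from Theorem 3.3(1).
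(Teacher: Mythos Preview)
Your proof is correct and matches the paper's approach: verify that each $F^V$ is lower quasicontinuous using only the quasicontinuity of the sections $f^y$, then invoke Theorem~3.3(1) with $\mathcal B$ a countable base for $Y$. Your direct verification (pick $(x_0,y_0)\in(U\times V)\cap f^{-1}(W)$, apply quasicontinuity of $f^{y_0}$ at $x_0$) is exactly right and is essentially the content of Proposition~2.1 combined with the observation, recorded just before it, that quasicontinuity of $f^b$ at $a$ implies horizontal quasicontinuity of $f$ at $(a,b)$.

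The confusion in your write-up stems from a misreading of the sentence preceding the corollaries: the paper says ``In view of 3.3 (and 2.4, for Corollary~3.7)'', meaning that Corollary~2.4 is invoked \emph{only} for Corollary~3.7, not for the present statement (which is Corollary~3.5). For 3.5 the route is simply $f^y$ quasicontinuous $\Rightarrow$ $f$ horizontally quasicontinuous $\Rightarrow$ $F^V$ lower quasicontinuous (Proposition~2.1) $\Rightarrow$ Theorem~3.3(1). No hypothesis on the $f_x$ is needed for the lower quasicontinuity of $F^V$, so the ``gap'' you worried about does not exist; your direct argument already closes it, and you may drop the detour through Corollary~2.4 and Proposition~2.3(ii) entirely.
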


\begin{cor} 
 Let  $X$ and  $Y$ be  topological spaces,  let $Z$  be a metric space,  and  let $f: X\times Y\to Z$ be  a mapping. 
Let us suppose that   $Y$ has a countable pseudobase and  that   for every  $y\in Y$, the mapping  $f^y$
is quasicontinuous. Then  there  is a residual set $R\subset X$ such that for every   $(a,b)\in R\times Y$, $f$  is 
quasicontinuous  {\rm (}cliquish{\rm )} with respect to the variable $x$  at
$(a,b)$ provided that  
$f_a$ is quasicontinuous  {\rm (}respectively,  cliquish{\rm )} at $b$.
\end{cor}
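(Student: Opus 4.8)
The plan is to deduce this directly from Theorem 3.3 by producing a suitable countable collection $\mathcal B$ and verifying its hypotheses via Corollary 2.4. First I would fix a countable pseudobase $\mathcal B$ for $Y$ consisting of nonempty open sets, and observe that since each $f^y$ is quasicontinuous on $X$ (hence each $f^y$ is quasicontinuous at every point of $X$), for each $x\in X$ we may take $D_x=Y$ in Corollary 2.4; of course each $f_x$ need not be quasicontinuous, so Corollary 2.4 is not directly applicable to $f$ itself, and I would instead invoke it on the partial data: the point is that for each nonempty open $V\subset Y$ we want $F^V:X\ni x\to f_x(V)\in 2^Z$ to be lower quasicontinuous. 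Here I would appeal to Proposition 2.3, case (i): vertical (i.e. $Y$-side) quasicontinuity of $f$ at points of $X\times V$ holds because each $f^y$ is quasicontinuous, and we need, for each $x$, a dense $D_x\subset V$ on which $f$ is lower $X$-quasicontinuous. Since quasicontinuity of all $f^y$ gives, via the horizontal-quasicontinuity observation in Definition 2.2, lower $X$-quasicontinuity at \emph{every} point of $X\times V$, we may take $D_x=V$.

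With that established, for every $V\in\mathcal B$ the mapping $F^V$ is lower quasicontinuous, so Theorem 3.3 applies and yields a residual set $R\subset X$ with properties (1), (2), (3) for every $a\in R$. It remains to transfer conclusions (2) and (3) of Theorem 3.3 into the statement of the corollary: given $(a,b)\in R\times Y$ with $f_a$ quasicontinuous (resp. cliquish) at $b$, I must check the neighborhood hypothesis in 3.3(2)–(3), namely that \emph{some} neighborhood of $b$ in $Y$ has a pseudobase contained in $\mathcal B$. Since $\mathcal B$ is a pseudobase for all of $Y$, the neighborhood $V'=Y$ itself has $\mathcal B$ as a pseudobase; hence the hypothesis is automatically fulfilled for every $b\in Y$. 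Therefore Theorem 3.3(2) gives that $f$ is quasicontinuous with respect to the variable $x$ at $(a,b)$ whenever $f_a$ is quasicontinuous at $b$, and Theorem 3.3(3) gives cliquishness with respect to $x$ at $(a,b)$ whenever $f_a$ is cliquish at $b$.

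The only mild subtlety—and the step I would be most careful about—is the reduction establishing lower quasicontinuity of $F^V$ for all nonempty open $V\subset Y$, since the corollary's hypothesis concerns only the partial maps $f^y$ (quasicontinuous) and $f_a$ (no global assumption). The resolution, as indicated above, is that quasicontinuity of each $f^y$ already forces $f$ to be horizontally quasicontinuous at every point of $X\times Y$ (by the remark preceding Proposition 2.1), and Proposition 2.1 then directly yields that $F^V$ is lower quasicontinuous at every $a\in X$ for every nonempty open $V$—so one does not even need the full strength of Proposition 2.3 or Corollary 2.4 here; a single reference to Proposition 2.1 suffices. Everything else is a routine unwinding of definitions and an application of Theorem 3.3.
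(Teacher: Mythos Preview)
Your final argument is correct and coincides with the paper's own justification, which is simply the phrase ``in view of 3.3'': quasicontinuity of every $f^y$ makes $f$ horizontally quasicontinuous at each point (remark before Proposition~2.1), so Proposition~2.1 gives lower quasicontinuity of $F^V$ for every nonempty open $V$, and then Theorem~3.3(2)--(3) applies with $\mathcal B$ the given countable pseudobase and $V'=Y$.

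One correction to the detour you rightly abandoned: in Proposition~2.3(i), \emph{vertical} quasicontinuity at $(a,b)$ is the $Y$-symmetric analogue of horizontal quasicontinuity and would follow from quasicontinuity of $f_a$, not of $f^y$; so your claimed verification of that hypothesis from the assumption on the $f^y$'s is wrong, and Proposition~2.3(i) is genuinely unavailable here. This does not affect your conclusion, since the Proposition~2.1 route you settle on bypasses the issue entirely.
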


\begin{cor} Let  $X$ be  a topological space, let  $Y$  be  a topological space with  a countable base 
{\rm (}pseudobase{\rm )},   and  let $f: X\times Y\to Z$ be  a mapping of  $
X\times Y$ into a metric space $Z$. 
Suppose that  for each  $x\in X$,  the mapping $f_x$ is continuous 
{\rm (}quasicontinuous{\rm )} 
and there is a dense set $D_x\subset Y$  such that $f^y$ is quasicontinuous at $x$ for every $y\in D_x$. 
Then  there  is a residual set $R\subset X$ such that $f$ is   continuous
 {\rm (}respectively,  quasicontinuous with respect to the variable $x${\rm )}
at every point of $R\times Y$.
\end{cor}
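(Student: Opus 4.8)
The plan is to derive Corollary 3.8 by feeding the hypotheses into Corollary 2.4 to obtain lower $X$-quasicontinuity of $f$, and then applying Theorem 3.3 with $\mathcal B$ a fixed countable base (resp.\ pseudobase) of $Y$. First I would observe that the hypothesis on $f_x$ is exactly what Corollary 2.4 needs: for each $x\in X$, $f_x$ is continuous (hence quasicontinuous) or quasicontinuous, and there is a dense $D_x\subset Y$ along which $f^y$ is quasicontinuous at $x$; so Corollary 2.4 yields that $f\colon X\times Y\to Z$ is lower $X$-quasicontinuous. By Definition 2.2 this says precisely that for every nonempty open $V\subset Y$, the set-valued mapping $F^V\colon X\ni x\mapsto f_x(V)\in 2^Z$ is lower quasicontinuous.

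Next I would invoke Theorem 3.3. Fix a countable base (resp.\ pseudobase) $\mathcal B$ of $Y$ consisting of nonempty open sets; then every $V\in\mathcal B$ is a nonempty open subset of $Y$, so by the previous paragraph $F^V$ is lower quasicontinuous, which is the standing hypothesis of Theorem 3.3. Hence there is a residual set $R\subset X$ such that statements (1), (2), (3) of Theorem 3.3 hold for each $a\in R$. In the ``base'' case, every $b\in Y$ has a neighbourhood base contained in $\mathcal B$, so statement (1) gives: for $a\in R$, if $f_a$ is continuous at $b$ then $f$ is continuous at $(a,b)$; thus $f$ is continuous at every point of $R\times Y$ where $f_a$ is continuous in the second variable. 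Since $f_x$ is continuous for every $x$ (in particular $f_a$ is continuous at every $b$), $f$ is continuous at every point of $R\times Y$.

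In the ``pseudobase'' case, every $b\in Y$ has a neighbourhood (namely $Y$ itself) with a pseudobase contained in $\mathcal B$, so statement (2) of Theorem 3.3 applies: for $a\in R$, if $f_a$ is quasicontinuous at $b$ then $f$ is quasicontinuous with respect to the variable $x$ at $(a,b)$. Since $f_x$ is quasicontinuous for every $x$, in particular $f_a$ is quasicontinuous at every $b\in Y$, and therefore $f$ is quasicontinuous with respect to the variable $x$ at every point of $R\times Y$. This is exactly the assertion of Corollary 3.8.

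There is essentially no obstacle here: the corollary is a straightforward specialization, the only points requiring a word of care being (a) checking that the hypothesis ``$f_x$ continuous'' is strong enough to conclude ``quasicontinuous'' so that Corollary 2.4 applies in the continuous case, and (b) noting that having a pseudobase for the neighbourhood $Y$ of $b$ is enough to trigger statement (2), so that no further hypothesis on the topology of $Y$ beyond the existence of a countable pseudobase is needed. Both are immediate from the definitions recalled in Sections 2 and 3, so the proof is a two-line chain of citations to Corollary 2.4 and Theorem 3.3.
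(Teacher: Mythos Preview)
Your proof is correct and follows exactly the route indicated in the paper: apply Corollary~2.4 (continuous $\Rightarrow$ quasicontinuous covers the first case) to get lower $X$-quasicontinuity, then feed a countable base (resp.\ pseudobase) $\mathcal B$ of nonempty open sets into Theorem~3.3 and read off (1) (resp.\ (2)). The paper itself gives no more detail than the phrase ``In view of 3.3 (and 2.4, for Corollary~3.7)'', so your expansion matches its intended argument.
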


\begin{remark}
 {\rm One can also obtain Theorem 3.3  by using the following lemma established 
in   \cite{Ku}:  {\it  Let $X$ be a topological space,   let $(Z, d)$ be a metric space,  and  let $F: X\to 2^Z$  be a   lower 
quasicontinuous set-valued mapping. Then for  every $r > 0$ there exists an open dense subset $U$ of $X$ 
and a continuous mapping $f: U\to Z$ such that $d(f(u), F(u)) < r$ for all $u\in U$. }}
\end{remark}

\section{New light  on some well-known results}
The aim of this section is to relate the results in Section 3 to some well-known results
from the literature.   In what follows,  $X$ is a Baire space,  $Y$ is a topological space,  and
$f$ is a mapping   of  $X\times Y$   into  a metric space $(Z, d)$.  \par

In \cite[Theorem 1]{Mi},  Mibu proves   that the set  of  continuity points of $f$ is residual in $X\times Y$ assuming 
   $f_x$    to be continuous for all $x\in X$,   $f^y$    continuous for all  $y$ in  a given dense subset  of $Y$,  and  
   $Y$    first countable.  Theorem 3.3  and  Corollary  2.4   above  immediately give  Mibu's result 
(in a somewhat  more general form).

 \begin{theo}  
 Suppose that   $Y$  is first countable   and  that,  for each  $x\in X$,  $f_x$  is   continuous  and 
$f^y$ is quasicontinuous at $x$ for every $y$ in a dense set $D_x\subset Y$.  Then  for each  $y\in Y$, there exists a 
residual set $R_y\subset X$ such that $f$ is continuous at every  point of $R_y\times \{y\}$.
\end{theo}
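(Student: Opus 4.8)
The plan is to obtain Theorem~4.1 as an immediate consequence of Corollary~2.4 and part~(1) of Theorem~3.3, so the proof will essentially be a matter of checking hypotheses. First I would note that every continuous mapping is quasicontinuous; hence, for each $x\in X$, the partial mapping $f_x$ is quasicontinuous, and together with the assumption that $f^y$ is quasicontinuous at $x$ for every $y$ in the dense set $D_x\subset Y$, we are precisely in the situation of Corollary~2.4. That corollary then yields that $f:X\times Y\to Z$ is lower $X$-quasicontinuous; by Definition~2.2 this means that for \emph{every} nonempty open set $V\subset Y$ the set-valued mapping $F^V:X\ni x\mapsto f_x(V)\in 2^Z$ is lower quasicontinuous.

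Next, I would fix $y\in Y$ and use first countability of $Y$ to produce a countable neighborhood base at $y$; replacing each of its members by an open neighborhood it contains, one gets a countable collection $\mathcal B_y$ of nonempty open subsets of $Y$ which is still a neighborhood base at $y$. Since $F^V$ is lower quasicontinuous for every nonempty open $V$, in particular for every $V\in\mathcal B_y$, Theorem~3.3 applies with $\mathcal B=\mathcal B_y$: it provides a residual set $R_y\subset X$ (depending on $y$ only through the choice of $\mathcal B_y$) such that conclusions (1)--(3) hold for every $a\in R_y$.

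To finish, I would let $a\in R_y$ be arbitrary. The point $b=y$ has a neighborhood base, namely $\mathcal B_y$, contained in $\mathcal B=\mathcal B_y$, and $f_a$ is continuous at $y$ because $f_a$ is continuous on all of $Y$ by hypothesis. Hence conclusion~(1) of Theorem~3.3 gives that $f$ is continuous at $(a,y)$. As $a$ ranged over $R_y$, we conclude that $f$ is continuous at every point of $R_y\times\{y\}$, which is the assertion of the theorem.

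There is no genuine obstacle in this argument; it is bookkeeping on top of the machinery already developed. The only point deserving a moment's attention is the reduction in the second paragraph: Theorem~3.3 requires a single countable collection $\mathcal B$ whose members all make $F^V$ lower quasicontinuous \emph{and} which contains a neighborhood base at the relevant point. First countability of $Y$ supplies such a $\mathcal B$ at the prescribed $y$, while Corollary~2.4 delivers the lower quasicontinuity of $F^V$ simultaneously for all nonempty open $V$, so the two requirements are met without any conflict. (Note that the Baire hypothesis on $X$, assumed throughout Section~4, is what makes the residual sets $R_y$ dense, though it is not needed for the formal derivation above.)
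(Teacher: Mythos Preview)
Your proposal is correct and follows exactly the route the paper indicates: the paper simply states that Theorem~4.1 follows immediately from Theorem~3.3 and Corollary~2.4, and your argument spells out precisely this derivation. The only detail you add beyond the paper's one-line justification is the (necessary) care in taking the countable neighborhood base $\mathcal B_y$ to consist of open sets, so that Corollary~2.4 (via Definition~2.2) guarantees lower quasicontinuity of each $F^V$ with $V\in\mathcal B_y$; this is exactly right.
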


  In \cite[Theorem 3]{F}, Fudali concludes  that $f$ is cliquish assuming  $f_x$ to be cliquish for each  $x\in X$, 
 $f^y$   quasicontinuous for each $y\in Y$, and  $Y$   locally  second countable (a result stronger than
Mibu's Theorem  2 in  \cite{Mi}).  The following (slightly more general) statement is deduced from 3.3.  

\begin{theo} 
Suppose that $f^y$ is quasicontinuous for  every   $y\in Y$,  and suppose that there is a dense 
set $D\subset Y$  such  that for every $y\in D$:
\begin{itemize}
\item[{\rm ({\romannumeral 1})}]  There is a dense Baire subspace $Q_y$ of $X$ such 
 that $f_x$ is cliquish  at $y$ for every $x\in Q_y$, and 
\item[{\rm ({\romannumeral 2})}]  some neighborhood of $y$ in $Y$ has a countable pseudobase.
\end{itemize}
 Then  $f$ is cliquish.
\end{theo}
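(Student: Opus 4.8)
The plan is to verify that $f$ is cliquish at an arbitrarily prescribed point $(a,b)\in X\times Y$; since cliquishness is a local property, at each such point we are free to choose the auxiliary data (a point of $D$, a countable pseudobase, a residual subset of $X$) depending on $(a,b)$. So fix $\varepsilon>0$ and a basic neighborhood $U_0\times V_0$ of $(a,b)$ in $X\times Y$; the task is to produce a nonempty open set $O\subset U_0\times V_0$ with $\mathrm{diam}(f(O))\le\varepsilon$. Using density of $D$, pick $y_0\in D\cap\mathrm{int}(V_0)$. By hypothesis (ii), $y_0$ has a neighborhood with a countable pseudobase; shrinking it into $\mathrm{int}(V_0)$, passing to its interior, and keeping from the pseudobase only the members contained in that interior, one obtains an open neighborhood $N$ of $y_0$ with $N\subset V_0$ together with a countable pseudobase $\mathcal B_0$ for $N$ whose members are open subsets of $Y$ contained in $N$.

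Next I would apply Theorem 3.3 with $\mathcal B=\mathcal B_0$. Its hypothesis holds: since $f^y$ is quasicontinuous for every $y\in Y$, $f^y$ is quasicontinuous at every $a\in X$, so (by the remarks in Section 2) $f$ is horizontally quasicontinuous, hence lower $X$-quasicontinuous, i.e. for every nonempty open $V\subset Y$ the set-valued mapping $F^V$ is lower quasicontinuous, and in particular this holds for every $V\in\mathcal B_0$. Theorem 3.3 therefore yields a residual set $R\subset X$ with the property that, for each $a'\in R$, if $f_{a'}$ is cliquish at $y_0$ then — as $N$ is a neighborhood of $y_0$ with pseudobase $\mathcal B_0\subset\mathcal B_0$ — statement (3) of the theorem gives that $f$ is cliquish with respect to the variable $x$ at $(a',y_0)$.

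Hypothesis (i) now enters. Let $Q_{y_0}$ be a dense Baire subspace of $X$ with $f_x$ cliquish at $y_0$ for every $x\in Q_{y_0}$, and write $R\supset\bigcap_{n}G_n$ with each $G_n$ open and dense in $X$. Since $Q_{y_0}$ is dense in $X$, each $G_n\cap Q_{y_0}$ is open and dense in the subspace $Q_{y_0}$; as $Q_{y_0}$ is a Baire space, $\bigcap_n(G_n\cap Q_{y_0})$ is dense in $Q_{y_0}$, hence meets the nonempty open set $\mathrm{int}(U_0)\cap Q_{y_0}$. Fix a point $a'$ in this intersection: then $a'\in R$, $a'\in Q_{y_0}$ (so $f_{a'}$ is cliquish at $y_0$), and $a'\in\mathrm{int}(U_0)\subset U_0$; by the previous paragraph, $f$ is cliquish with respect to the variable $x$ at $(a',y_0)$.

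It remains to unwind this last fact. Applying the definition of cliquishness with respect to the variable $x$ at $(a',y_0)$ to the neighborhood $N$ and to $\varepsilon/2$ produces a neighborhood $U'$ of $a'$ in $X$ and a nonempty open set $O'\subset Y$ with $O'\subset N$ and $d(f(a',y),f(x,y'))\le\varepsilon/2$ for all $x\in U'$ and $y,y'\in O'$. Set $U''=\mathrm{int}\bigl(U'\cap\mathrm{int}(U_0)\bigr)$, a nonempty open neighborhood of $a'$ contained in $U_0$, and put $O=U''\times O'$: this is a nonempty open subset of $U_0\times V_0$, and for any $(x_1,y_1),(x_2,y_2)\in O$ the triangle inequality through the value $f(a',y_1)$ gives $d(f(x_1,y_1),f(x_2,y_2))\le d(f(x_1,y_1),f(a',y_1))+d(f(a',y_1),f(x_2,y_2))\le\varepsilon/2+\varepsilon/2=\varepsilon$; hence $\mathrm{diam}(f(O))\le\varepsilon$, and since $(a,b)$, $\varepsilon$ and $U_0\times V_0$ were arbitrary, $f$ is cliquish. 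The one genuinely delicate point, I expect, is organizational rather than conceptual: the residual set $R$ furnished by Theorem 3.3 depends on the chosen countable pseudobase, hence on $y_0$, hence on the point $(a,b)$, so no single global residual set is available and the argument must be carried out pointwise — this is precisely why hypothesis (i) is stated with a \emph{Baire} subspace $Q_{y_0}$, which is exactly what makes $R\cap Q_{y_0}$ large enough to meet the prescribed open set $\mathrm{int}(U_0)$. The only other item needing care is the first-paragraph adjustment that lets one take the pseudobase to consist of open subsets of $Y$ lying inside a neighborhood of $y_0$ contained in $V_0$.
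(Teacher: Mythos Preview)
Your proof is correct and follows essentially the same route as the paper's own argument: pick a point of $D$ inside the given open set in $Y$, invoke Theorem~3.3(3) with a local countable pseudobase to obtain a residual set $R$, use the Baire property of $Q_{y_0}$ to get $R\cap Q_{y_0}$ dense in $X$, select a point $a'$ there at which $f$ is cliquish with respect to $x$, and conclude via the triangle inequality. The paper's version is terser (it works directly with a nonempty open $U\times V$ rather than framing things pointwise at $(a,b)$, and it leaves the pseudobase manipulation and the density of $R\cap Q_b$ implicit), but the substance is identical.
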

\begin{proof}
Let $U$  be a nonempty   open set in  $X$,  $V$  a nonempty   open set in  $Y$,  and  $\varepsilon>0$. 
Let us choose $b\in V\cap D$. By Theorem 3.3,   there is a residual set $R\subset X$ such that   for any $a\in R$, 
if  $f_a$ is cliquish  at $b$, then  $f$ is  cliquish  with respect to the variable $x$ at  $(a,b)$.
Since $Q_b$ is a dense Baire subspace  of $X$, $R\cap Q_b$ is dense in $X$; therefore  there is   $a\in U$  such that $f$ is 
cliquish with respect to the variable $x$ at $(a,b)$. Let us choose  $U_1\in {\mathcal V}_X(a)$  contained in $U$ 
and a nonempty open set $V_1$ in $Y$ contained in $V$ such  that $d(f(a,y),f(x,y'))<{\varepsilon}/{2}$  for 
all  $x\in U_1$ and $y,y'\in V_1$;  then  $d(f(x,y),f(x',y'))< \varepsilon$ for all  $x,x'\in U_1$  and $y,y'\in V_1$.
\end{proof} 

\vskip 2mm

 In \cite[Theorem 1]{M}, it is proved by Martin   that  if  $f_x$ is  quasicontinuous   for every $x\in X$,  
   $f^y$  quasicontinuous for every $y\in Y$, and  $Y$  second  countable,  then  $f$  is  quasicontinuous.  
Using 3.3 and 2.4 under condition (i),  and 3.3 under condition (ii),   Martin's result    can be improved as follows.

 \begin{theo} 
Suppose that for each $y\in Y$, some neighborhood of $y$ in $Y$ has a countable pseudobase,
 and suppose that  one of  the following holds:
 \begin{itemize} 
 \item[{\rm ({\romannumeral 1})}] For each  $x\in X$, $f_x$  is quasicontinuous  and there is a dense set $D_x\subset Y$
such that $f^y$ is quasicontinuous at $x$ for every $y\in D_x$. 
\item[{\rm ({\romannumeral 2})}] For each $y\in Y$, $f^y$ is quasicontinuous and there is a dense Baire subspace 
$Q_y\subset X$  such that $f_x$ is quasicontinuous at $y$ for every  $x\in Q_y$.
\end{itemize}
 Then $f$ is quasicontinuous.
\end{theo}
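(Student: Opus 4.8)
The plan is to reduce the quasicontinuity of $f$ at an arbitrary point $(a_0,b_0)\in X\times Y$ and for an arbitrary $\varepsilon>0$ to an application of Theorem 3.3, exactly mimicking the structure of the proof of Theorem 4.2. Fix a neighborhood $U\times V$ of $(a_0,b_0)$; we must produce a nonempty open set $G\subset U$ and a nonempty open set $O\subset V$ with $\mathrm{diam}(f(G\times O))\leq\varepsilon$. The first step is to arrange the hypothesis of Theorem 3.3, namely that the set-valued mappings $F^W:x\mapsto f_x(W)$ are lower quasicontinuous for all members $W$ of a suitable countable collection $\mathcal B$. Under condition (i) this is immediate from Corollary 2.4 (which gives that $f$ is lower $X$-quasicontinuous, i.e. $F^W$ is lower quasicontinuous for \emph{every} nonempty open $W\subset Y$), so one takes as $\mathcal B$ a countable pseudobase for some neighborhood $V'\subset V$ of a well-chosen point $b\in V$. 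Under condition (ii) one instead invokes Proposition 2.3(ii): since $f^y$ is quasicontinuous for every $y$, $f$ is in particular lower $Y$-quasicontinuous everywhere, and the dense Baire subspace $Q_y$ supplies, after a Baire-category argument, a dense set of points of $X$ at which $f$ is lower $X$-quasicontinuous near a suitable $b$; thus $F^W$ is again lower quasicontinuous for all open $W$ in a neighborhood of $b$.

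Once the lower quasicontinuity hypothesis is secured, I would choose $b\in V$ appropriately and apply Theorem 3.3(2): there is a residual $R\subset X$ such that for every $a\in R$, if $f_a$ is quasicontinuous at $b$ then $f$ is quasicontinuous with respect to the variable $x$ at $(a,b)$. The point of selecting $b$ carefully is that we need $f_a$ to actually be quasicontinuous at $b$ for some $a\in R\cap U$. In case (i) every $f_a$ is quasicontinuous (everywhere), so any $a\in R\cap U$ works since $R$ is residual in the Baire space $X$ and $U$ is nonempty open. In case (ii) we need $f_a$ quasicontinuous at $b$, which holds for $a\in Q_b$; so we must pick $b$ in the dense set of points of $V$ whose neighborhood has a countable pseudobase, and then use that $Q_b$ is a dense Baire subspace, hence $R\cap Q_b$ is dense in $X$ and meets $U$. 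Either way we obtain $a\in U$ at which $f$ is quasicontinuous with respect to $x$ at $(a,b)$.

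The final step unpacks that conclusion: by definition of quasicontinuity with respect to $x$, for the neighborhood $V$ of $b$ and the given $\varepsilon$ there is a neighborhood $U_1\in\mathcal V_X(a)$, which we may take inside $U$, and a nonempty open $O\subset V$ with $d(f(a,b),f(x,y))\leq\varepsilon/3$ for all $x\in U_1$, $y\in O$ — wait, more precisely $d(f(a,b),f(x,y))\leq \varepsilon/2$ for all such $x,y$; this already forces $\mathrm{diam}(f(U_1\times O))\leq\varepsilon$ by the triangle inequality, and $U_1\times O\subset U\times V$ is the desired nonempty open box witnessing quasicontinuity of $f$ at $(a_0,b_0)$. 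Since $(a_0,b_0)$, $U$, $V$, $\varepsilon$ were arbitrary, $f$ is quasicontinuous.

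The main obstacle is case (ii), specifically verifying that the hypotheses of Proposition 2.3(ii) are met near the chosen point $b$: one needs a \emph{dense} (in the relevant neighborhood of $b$) set of points of $X$ at which $f$ is lower $X$-quasicontinuous, and this requires combining the global quasicontinuity of the $f^y$ (which handles lower $Y$-quasicontinuity) with a Baire-category argument on the dense Baire subspaces $Q_y$ to upgrade pointwise quasicontinuity of the $f_x$ to the lower $X$-quasicontinuity needed. Keeping track of which neighborhood of which point the countable pseudobase governs — and ensuring the same $b$ serves simultaneously for the pseudobase condition, for membership in $Q_b$, and for Theorem 3.3 — is the bookkeeping that demands care; the metric estimates at the end are entirely routine.
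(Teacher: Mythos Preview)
There is a genuine gap: your argument establishes cliquishness, not quasicontinuity. You state the goal as finding nonempty open $G\subset U$ and $O\subset V$ with $\mathrm{diam}(f(G\times O))\leq\varepsilon$, and your final step delivers exactly that via $d(f(a,b),f(x,y))\leq\varepsilon/2$ on $U_1\times O$. But the point $(a,b)$ at which you land is \emph{not} the original point $(a_0,b_0)$: in case~(i) you pick an arbitrary $a\in R\cap U$, in case~(ii) an arbitrary $a\in R\cap Q_b\cap U$ and $b\in V$. Nothing relates $f(a,b)$ to $f(a_0,b_0)$, so a small-diameter box inside $U\times V$ is all you get --- that is precisely the conclusion of Theorem~4.2, not of the present theorem.

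What the paper does, and what your sketch omits, is to \emph{first} exploit the quasicontinuity hypotheses at the original point before passing to the residual set. In case~(i), quasicontinuity of $f_{a_0}$ at $b_0$ produces an open $V_1\subset V$ on which $f_{a_0}$ stays $\varepsilon$-close to $f(a_0,b_0)$; then pick $y_1\in V_1\cap D_{a_0}$ and use that $f^{y_1}$ is quasicontinuous at $a_0$ to get an open $U_1\subset U$ with $f(\cdot,y_1)$ still $\varepsilon$-close to $f(a_0,b_0)$; only then choose $x_1\in U_1\cap R_{y_1}$ and apply Theorem~3.3(2) at $(x_1,y_1)$. In case~(ii), quasicontinuity of $f^{b_0}$ gives an open $U_1\subset U$ on which $f(\cdot,b_0)$ is $\varepsilon$-close to $f(a_0,b_0)$, and then one chooses $x_1\in U_1\cap R_{b_0}\cap Q_{b_0}$. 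In both cases the chain of inequalities keeps $f(a_0,b_0)$ as the anchor throughout. A secondary point: in case~(ii) you do not need Proposition~2.3 at all --- since $f^y$ is quasicontinuous for every $y$, $f$ is horizontally (hence lower $X$-) quasicontinuous at every point, so $F^W$ is lower quasicontinuous for every nonempty open $W$ directly.
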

\begin{proof}  Let $(a,b)$ in $X\times Y$.  Let  $U$  be  an open neighborhood of  $a$ in  $X$,  $V$  an open 
neighborhood  of   $b$ in $Y$,  and  $\varepsilon>0$.   \par
Case ({\romannumeral 1}):   By  Theorem 3.3  and  Corollary  2.4, for each  $y\in Y$ there exists a residual set $R_y\subset X$ 
such  that  $f$ is quasicontinuous with respect to the  variable $x$   at every point of $R_y\times\{y\}$. 
Let $V_1\subset V$ be a nonempty open  set such that $d(f(a,y),f(a,b))<\varepsilon$ for every  $y\in V_1$. 
Choose $y_1\in V_1\cap D_a$ and let  $U_1\subset U$ be a nonempty open set such that $d(f(x,y_1),f(a,b))<\varepsilon$ 
 for every  $x\in U_1$.   Now, choosing $x_1\in U_1\cap R_{y_1}$ gives an open  neighborhood  $U_2$ of $x_1$ contained in 
 $U_1$   and a nonempty open set $V_2\subset V_1$ such that $d(f(x,y),f(a,b))<\varepsilon$ for every
$(x,y)\in U_2\times V_2$.  Hence  the mapping $f$ is quasicontinuous at $(a,b)$. \par
Case ({\romannumeral 2}):  By  Theorem 3.3,  for each  $y\in Y$ there is a residual set $R_y\subset X$ such that   for any 
$a^{\prime}\in R_y$,   if  $f_{a^{\prime}}$ is quasicontinuous   at $y$, then  $f$ is  quasicontinuous   with respect to the variable 
$x$ at  $(a^{\prime},y)$.   Choose a nonempty open set $U_1\subset U$   such that $d(f(x,b),f(a,b))<\varepsilon$ for each $x\in U_1$.
Since $Q_b$ is a dense Baire subspace  of $X$, $R_b\cap Q_b$ is dense in $X$.   Choosing  $x_1\in U_1\cap R_b\cap Q_b$ 
gives an open  neighborhood  $U_2$ of $x_1$    contained in  $U_1$  and a nonempty open set $V_1\subset V$ such that 
$d(f(x,y),f(a,b))<\varepsilon$ for every  $(x,y)\in U_2\times V_1$. Hence the mapping $f$ is quasicontinuous at $(a,b)$.      
\end{proof}

\vskip 2mm

 In  \cite[Theorem 4]{M},    Martin  also proves  that  if  $f_x$  is continuous  for every $x\in X$,   $f^y$  quasicontinuous for 
every  $y\in Y$,   and   $Y$    first  countable,  then  $f$  is  quasicontinuous with respect to the  variable $y$  
(cf. also   \cite{P1}).   The following  variant  of this theorem   is an easy  application of 3.3.  

\begin{theo}
 Let $b\in Y$ be a point  with a countable neighborhood base  in $Y$.  Suppose that $f^y$ is quasicontinuous for  every $y\in Y$, 
 and  suppose that  $f_x$ is continuous at $b$  for every  $x$ in a  given  dense Baire subspace $Q$ of $X$. Then: 
\begin{itemize}
\item[{\rm (1)}]  The mapping  $f$ is  quasicontinuous with respect to the variable   $y$ at each point of $X\times\{b\}$. 
\item[{\rm (2)}] The  set  of all $x\in X$ such that  the mapping $f$ is continuous at $(x, b)$ is  residual in $X$.
\end{itemize}
\end{theo}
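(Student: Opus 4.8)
The plan is to apply Theorem 3.3 with a countable collection $\mathcal B$ consisting of a neighborhood base at the single point $b$. More precisely, fix a countable neighborhood base $\{V_n : n\in\mathbb N\}$ at $b$ in $Y$, and set $\mathcal B = \{V_n : n\in\mathbb N\}$. To invoke Theorem 3.3 I first need to verify that for every $V\in\mathcal B$ the set-valued mapping $F^V : X\ni x\to f_x(V)\in 2^Z$ is lower quasicontinuous. This is where I would use the hypothesis that $f^y$ is quasicontinuous for every $y\in Y$ together with the fact that $f_x$ is continuous at $b$ for every $x\in Q$: since each $V_n$ is a neighborhood of $b$, continuity of $f_x$ at $b$ gives, for $x\in Q$, that $f_x(V_n)$ contains points arbitrarily close to $f(x,b)$, so the "dense set of good points" hypothesis of Proposition 2.3(\romannumeral 1) is met on $Q$ — but $Q$ need not be dense in a way compatible with that proposition's structure. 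The cleanest route is instead to check lower quasicontinuity of $F^{V_n}$ directly: given a nonempty open $U\subset X$ and open $W\subset Z$ with $U\cap (F^{V_n})^-(W)\ne\emptyset$, pick $(x_0,y_0)\in (U\times V_n)\cap f^{-1}(W)$; since $f^{y_0}$ is quasicontinuous at $x_0$ there is a nonempty open $O\subset U$ with $f(O\times\{y_0\})\subset W$, whence $O\subset (F^{V_n})^-(W)$. (Here I do not even need $Q$; quasicontinuity of all the $f^y$ alone yields lower quasicontinuity of each $F^{V_n}$, which is exactly the hypothesis of Theorem 3.3.)

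Having established the hypothesis of Theorem 3.3 for this $\mathcal B$, I obtain a residual set $R\subset X$ such that, for each $a\in R$: by part (2) of Theorem 3.3, if $f_a$ is quasicontinuous at $b$ then $f$ is quasicontinuous with respect to the variable $x$ at $(a,b)$; and by part (1), if $f_a$ is continuous at $b$ then $f$ is continuous at $(a,b)$. Since continuity of $f_a$ at $b$ holds for all $a\in Q$, and $Q$ is a dense Baire subspace of $X$, the set $R\cap Q$ is residual (hence dense) in $X$. For part (2) of the theorem, I would argue that the set of $x\in X$ at which $f$ is continuous at $(x,b)$ contains $R\cap Q$; a small subtlety is that "residual in $X$" for the conclusion follows because $R$ is residual in $X$ and $Q$ is comeager-dense with the Baire property in the relative topology — more carefully, $R\cap Q$ is dense in $X$ and is an intersection of a residual (in $X$) set with $Q$, and since $Q$ is dense Baire, the trace of a residual set of $X$ on $Q$ is residual in $Q$, so $R\cap Q$ is dense in $X$; this density is what one records, and in fact the set of continuity points contains $R\cap Q$ which is residual in $X$ because $R$ itself is residual in $X$ and every point of $R\cap Q$ is a continuity point. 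Thus statement (2) holds.

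For part (1), take an arbitrary $a\in X$ and a nonempty open $U\subset X$; I want to produce the neighborhood $U'$ of some point and the nonempty open $O\subset V$ (for a given neighborhood $V$ of $b$ and $\varepsilon>0$) witnessing quasicontinuity with respect to $y$ at $(a,b)$. Since $f^b$ is quasicontinuous at $a$, there is a nonempty open $U_1\subset U$ with $f(U_1\times\{b\})\subset B(f(a,b),\varepsilon/3)$. Now $R\cap Q$ is dense in $X$, so pick $a_1\in U_1\cap R\cap Q$; then $f$ is continuous at $(a_1,b)$, so there is an open $U_2\subset U_1$ containing $a_1$ and an open $V_1\subset V$ containing $b$ with $f(U_2\times V_1)\subset B(f(a_1,b),\varepsilon/3)$, and combining with the previous estimate $d(f(a_1,b),f(a,b))<\varepsilon/3$ gives $d(f(x,y),f(a,b))<\varepsilon$ for all $(x,y)\in U_2\times V_1$; in particular $d(f(a,y),f(x,y))\le d(f(a,y),f(a,b))+d(f(a,b),f(x,y))$ — and shrinking $V$ at the outset so that $d(f(a,y),f(a,b))$ is controlled by quasicontinuity of $f^b$ is not available since $f_a$ need not be continuous, so instead I take $O=V_1$ and $U=U_2$ as the required open sets and note that $d(f(a,b),f(x,y))<\varepsilon$ for all $x\in U_2,\ y\in V_1$ already yields quasicontinuity of $f$ with respect to the variable $y$ at $(a,b)$ in the sense defined before Theorem 3.3, since that definition compares $f(a,b)$ with $f(x,y)$.

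The main obstacle I anticipate is the bookkeeping at the very end of part (1): the definition of "quasicontinuous with respect to the variable $y$ at $(a,b)$" (dual to the $x$-version recalled before Theorem 3.3) requires a neighborhood $V$ of $b$ fixed in advance and then produces an open $O\subset V$ in the \emph{first} variable's direction — one must be careful about which variable plays which role and make sure the point $a_1\in R\cap Q$ is chosen \emph{after} the open set in $X$ has been shrunk using $f^b$, so that continuity of $f$ at $(a_1,b)$ (coming from $a_1\in R\cap Q$ via Theorem 3.3(1)) can be invoked on that smaller open set. Everything else is a direct citation of Theorem 3.3 and a density argument using that $Q$ is a dense Baire subspace.
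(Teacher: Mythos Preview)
Your overall strategy is the paper's: take $\mathcal B$ to be a countable neighborhood base at $b$, verify directly from quasicontinuity of the sections $f^y$ that each $F^{V}$ is lower quasicontinuous, apply Theorem 3.3 to obtain a residual set $R\subset X$, and then exploit the density of $R\cap Q$ in $X$. Your argument for part~(1) is essentially the paper's (the paper uses $\varepsilon/2$ rather than $\varepsilon/3$ and phrases things in terms of the full set $A$ of continuity points rather than $R\cap Q$, but the mechanism---shrink $U$ via quasicontinuity of $f^b$, pick a point of joint continuity inside, then use continuity there---is identical).

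There is, however, a genuine gap in your argument for part~(2). You assert that $R\cap Q$ is residual in $X$ ``because $R$ itself is residual in $X$''. This is a non sequitur: a dense Baire subspace $Q$ of a Baire space $X$ need not be residual in $X$ (a Bernstein set in $\mathbb R$ is dense and Baire but not comeager), and then $R\cap Q\subset Q$ cannot be residual in $X$ either. What your argument \emph{does} establish correctly is that $R\cap Q$ is \emph{dense} in $X$: the trace on $Q$ of a set residual in $X$ is residual in $Q$, hence dense in $Q$, hence dense in $X$. The missing step, which the paper supplies, is the observation that the set
\[
A=\{x\in X:\ f\ \text{is continuous at}\ (x,b)\}
\]
is a $G_\delta$-subset of $X$ (for each $n$, the set of $x$ admitting $U\in\mathcal V_X(x)$ and $V\in\mathcal V_Y(b)$ with $\mathrm{diam}(f(U\times V))<1/n$ is open). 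Since $A\supset R\cap Q$ is dense and $G_\delta$, it is residual. Without this $G_\delta$ observation the conclusion of~(2) does not follow from what you have written.
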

\begin{proof} Let  $A$ be the set of all $x\in X$ such that   the mapping  $f$ is continuous at $(x, b)$.
By Theorem 3.3,   there is a residual set $R\subset X$ such that   $f$ is  continuous 
at each point $(x,b)\in R\times\{b\}$ provided that $f_x$ is continuous   at $b$. Remark  that  $Q$ being
 a dense Baire subspace  of $X$, the subset $R\cap Q$ of $A$ is dense in $X$, and consequently, $A$ is dense in $X$.  \par
(1) Let $a\in X$. Let  $U\in {\mathcal V}_X(a)$  and $\varepsilon>0$.   Let  $U^{\prime}$  a nonempty open set 
 in $X$ such that  $U^{\prime}\subset U$ and $f^b(U^{\prime})\subset B(f^b(a),  {\varepsilon}/{2})$; 
taking $x_0\in U^{\prime}\cap A$  gives   a nonempty open set   $O\subset U^{\prime}$ and    $V\in {\mathcal V}_Y(b)$ 
 such that, for every $(x,y)\in O\times V$,  $d(f(x,y),f(x_0,b))<{\varepsilon}/{2}$, and hence such that  $d(f(x,y),f(a,b))<{\varepsilon}$. \par
(2)   The $G_{\delta}$-set $A$ in $X$ being dense in $X$, it is  residual in $X$. 
\end{proof}

\vskip 2mm

As our last application of Theorem 3.3, we will now prove   a variant  of a recent result \cite[Theorem 2.5]{HH}.  Following \cite{HH}, 
a sequence $(f_n)_{n\in\mathbb N}$ of mappings of  $X$ into  $(Z, d)$  is said to be {\it  equi-quasicontinuous} at $x\in X$ if, for 
 each  neighborhood $U$ of $x$ in $X$ and each  $\varepsilon>0$,  there exists an  open set $O\subset X$ and  $n_0\in\mathbb N$
 such that $\emptyset\not= O\subset U$ and $d(f_n(x),f_n(y))<\varepsilon$ for every $n\geq n_0$ and $y\in O$.

 \begin{theo} 
Let $f_n : X\to Z$, $n\in\mathbb N$, be  a sequence   of quasicontinuous mappings, and let  $f: X\to Z$ be a mapping. Suppose
 that  for each $x\in X$,   the sequence $(f_n(x))_{n\in\mathbb N}$ clusters  to $f(x)$,  and suppose that for each $x$ in a given 
 dense Baire subspace $A$ of $ X$, the sequence $(f_n(x))_{n\in\mathbb N}$ converges to $f(x)$.  
Then,  for each $x\in A$, the following are equivalent:
 \begin{itemize}
 \item[{\rm (1)}] The sequence  $(f_n)_{n\in\mathbb N}$  is equi-quasicontinuous at $x$.
 \item[{\rm (2)}] 
 The mapping  $f: X\to Z$ is quasicontinuous at $x$.
 \end{itemize}
 \end{theo}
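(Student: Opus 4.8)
The plan is to deduce Theorem 4.5 from Theorem 3.3 by building a single mapping of two variables out of the sequence $(f_n)$. Concretely, I would set $Y = \mathbb{N}\cup\{\infty\}$, the one-point compactification of the discrete space $\mathbb{N}$, and define $g : X\times Y\to Z$ by $g(x,n)=f_n(x)$ for $n\in\mathbb{N}$ and $g(x,\infty)=f(x)$. The space $Y$ is first countable (indeed second countable), so the hypotheses on the second factor required by Theorem~3.3 are met once we know lower quasicontinuity of the relevant set-valued maps. Under this translation, ``$(f_n)$ is equi-quasicontinuous at $x$'' becomes exactly ``$g_x : Y\to Z$ is quasicontinuous at $\infty$'' (a neighborhood of $\infty$ being a cofinite set together with $\infty$), and ``$(f_n(x))$ clusters to $f(x)$'' resp. ``converges to $f(x)$'' becomes ``$g_x$ is cliquish at $\infty$'' resp. ``$g_x$ is continuous at $\infty$'' — here one should check that clustering is equivalent to cliquishness at $\infty$ for this particular $Y$, which holds because every neighborhood of $\infty$ is infinite, so $f(x)$ being a cluster value forces $g_x$ of that neighborhood to come arbitrarily close to $f(x)$, hence to have arbitrarily small oscillation near $\infty$ in the cliquish sense when combined with continuity of $g_x$ at each isolated point $n$.

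Next I would verify the lower quasicontinuity hypothesis of Theorem~3.3. Take $\mathcal{B}$ to be the countable collection of all sets of the form $\{n\in\mathbb{N}: n\geq n_0\}\cup\{\infty\}$ together with the singletons $\{n\}$, $n\in\mathbb{N}$; this $\mathcal{B}$ contains a neighborhood base at every point of $Y$ and a pseudobase for $Y$. For $V=\{n\}$ the map $F^V=x\mapsto\{f_n(x)\}$ is lower quasicontinuous precisely because $f_n$ is quasicontinuous. For $V=\{n\geq n_0\}\cup\{\infty\}$, the set-valued map is $x\mapsto f_x(V)=\{f(x)\}\cup\{f_n(x):n\geq n_0\}$; I claim this is lower quasicontinuous. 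Given a nonempty open $U\subset X$ and an open $W\subset Z$ meeting $F^V(x_0)$, either $W$ meets some $\{f_n(x_0)\}$ with $n\geq n_0$ — then quasicontinuity of that single $f_n$ gives a nonempty open subset of $U$ on which $f_n$ stays in $W$, hence on which $F^V$ meets $W$ — or $W$ contains $f(x_0)$ only; but since $A$ is dense, pick $x_0'\in U\cap A$, so $f(x_0')$ is the limit of $f_n(x_0')$, hence $f_n(x_0')\in W$ for some large $n\geq n_0$, and then quasicontinuity of that $f_n$ finishes the job. (One handles the ``or'' uniformly: actually $W$ meeting $F^V$ anywhere on $U\cap A$ via a tail term suffices, and density of $A$ guarantees $U\cap A\neq\emptyset$.) So all maps $F^V$, $V\in\mathcal{B}$, are lower quasicontinuous.

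With these facts in place, Theorem~3.3 applied to $g$ and $\mathcal{B}$ yields a residual set $R\subset X$ such that for every $a\in R$: if $g_a$ is quasicontinuous at a point $b\in Y$ whose neighborhoods have a pseudobase in $\mathcal{B}$ then $g$ is quasicontinuous with respect to $x$ at $(a,b)$; likewise for continuity and cliquishness with neighborhood bases. Now fix $x\in A$. The implication (1)$\Rightarrow$(2): if $(f_n)$ is equi-quasicontinuous at $x$, I want $f$ quasicontinuous at $x$, and the natural route is to intersect $R$ with $A$ — but $x$ itself need not lie in $R$. This is the point where I would use the standard density trick as in the proofs of Theorems~4.2 and~4.4: since $A$ is a dense Baire subspace, $R\cap A$ is dense in $X$, so given any neighborhood $U$ of $x$ and $\varepsilon>0$, one first shrinks using equi-quasicontinuity at $x$ to a nonempty open $O\subset U$ and a tail index $n_0$ with $d(f_n(x),f_n(y))<\varepsilon$ for $n\geq n_0$, $y\in O$; then picks $a\in O\cap R\cap A$ and applies Theorem~3.3 at $(a,\infty)$ (noting $g_a$ is continuous at $\infty$ because $a\in A$, which in particular gives quasicontinuity there) to obtain a nonempty open $O'\subset O$ on which $f$ has small oscillation relative to $f(a)$, combining the estimates to get $\operatorname{diam} f(O')$ small. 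The converse (2)$\Rightarrow$(1) is the easier direction: if $f$ is quasicontinuous at $x\in A$ then, using again that $(f_n(x))$ converges to $f(x)$ and that each $f_n$ is quasicontinuous, one directly produces, for given $U$ and $\varepsilon$, a nonempty open $O\subset U$ and $n_0$ witnessing equi-quasicontinuity — choose $n_0$ so that $d(f_n(x),f(x))<\varepsilon/3$ for $n\geq n_0$, shrink $U$ to where $f$ varies by less than $\varepsilon/3$, and intersect with the (nonempty open) sets where finitely many early considerations are controlled; the tail terms are handled by quasicontinuity of a single $f_n$ on that open set plus the triangle inequality.

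The main obstacle I anticipate is the bookkeeping around the base point $\infty\in Y$: one must be careful that a ``neighborhood of $\infty$'' is genuinely infinite, so that statements about cluster points translate correctly into cliquishness/quasicontinuity of $g_\infty$-type partial maps, and that the pseudobase/neighborhood-base clauses of Theorem~3.3 are satisfied for $\mathcal{B}$ at the point $\infty$. A secondary subtlety is ensuring lower quasicontinuity of $F^V$ for tail sets $V$ without assuming pointwise convergence everywhere — this is exactly where the density of $A$ (not the Baire property, just density) enters, and it must be invoked at the level of individual open sets $U$ rather than globally.
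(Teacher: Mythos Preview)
Your overall framework is exactly the paper's: encode the sequence as $g:X\times Y\to Z$ with $Y=\mathbb N\cup\{\infty\}$, check lower quasicontinuity of the tail set-valued maps, and invoke Theorem~3.3. But you have the two implications reversed in difficulty, and your sketch of the direction that actually needs Theorem~3.3 does not go through.

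The implication $(1)\Rightarrow(2)$ is elementary (the paper simply cites \cite{HH}): given $U$ and $\varepsilon$, equi-quasicontinuity yields $O\subset U$ and $n_0$ with $d(f_n(x),f_n(y))<\varepsilon/3$ for $n\geq n_0$, $y\in O$; for each $y\in O$ one may then choose a single $n\geq n_0$ (depending on $y$) so large that $d(f_n(x),f(x))<\varepsilon/3$ (since $x\in A$) and $d(f_n(y),f(y))<\varepsilon/3$ (since $f(y)$ is a cluster value of $(f_n(y))$), and the triangle inequality gives $d(f(x),f(y))<\varepsilon$. Your route via Theorem~3.3 here is correct but unnecessary.

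The implication $(2)\Rightarrow(1)$ is the substantive one, and your sketch fails at the decisive point. You propose: choose $n_0$ with $d(f_n(x),f(x))<\varepsilon/3$ for $n\geq n_0$, shrink $U$ to an open $O$ where $d(f(x),f(y))<\varepsilon/3$, and then ``handle the tail terms by quasicontinuity of a single $f_n$ plus the triangle inequality.'' But the triangle inequality gives
\[
d(f_n(x),f_n(y))\leq d(f_n(x),f(x))+d(f(x),f(y))+d(f(y),f_n(y)),
\]
and there is no control over $d(f(y),f_n(y))$ for $y\in O$: quasicontinuity of one $f_n$ does not bound this, and the clustering hypothesis only lets you pick $n$ depending on $y$, which is useless for equi-quasicontinuity. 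What is genuinely needed here is \emph{joint} continuity of $g$ at some point $(b,\infty)$ with $b$ in your shrunken open set, which is precisely what Theorem~3.3 delivers (after intersecting the residual set $R$ with the dense Baire subspace $A$): joint continuity at $(b,\infty)$ produces a nonempty open $O'\subset O$ and an index $n_1$ with $d(f(b),f_n(y))<\varepsilon/3$ for all $y\in O'$ and all $n\geq n_1$ simultaneously, and then the triangle inequality through $f(b)$ and $f(x)$ closes the argument. This is exactly the step the paper carries out.

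Two minor remarks. First, your dictionary ``equi-quasicontinuity at $x$ $\Leftrightarrow$ $g_x$ quasicontinuous at $\infty$'' is false in the one-point compactification (singletons $\{n\}$ are open there, so $g_x$ is automatically quasicontinuous at $\infty$ whenever $f(x)$ is a cluster value); fortunately you never actually use this translation. Second, the paper uses the coarser topology on $Y$ whose only nonempty open sets are the tails $\{m\geq n\}\cup\{\infty\}$, not the one-point compactification; either choice works for the application of Theorem~3.3 at the point $\infty$, so this difference is harmless.
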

 \begin{proof} We refer to \cite{HH} for the implication $(1) \Rightarrow (2)$. To prove
 that (2) implies (1), let $Y={\mathbb N}\cup\{\infty\}$  equipped with the topology whose nonempty open sets are the subsets
$\{m\in{\mathbb N}:m\geq n\}\cup\{\infty\}$ of $Y$ ($n\in\mathbb N$).  Let $g: X\times Y\to Z$ be the mapping defined 
by $g(x,n)=f_n(x)$  and $g(x,\infty)=f(x)$.  It is easy to check that    for  each $n\in\mathbb N$,  the set-valued 
mapping $X\ni x\to \{f_m(x):m\geq n\}\cup\{f(x)\}\in 2^Z$   is lower quasicontinuous.  
 Let $a\in A$ such that $f$ is quasicontinuous at $a$. Let $U\in {\mathcal V}_X(a)$  and $\varepsilon>0$. 
There is $n_0\in \mathbb N$ such that  $d(f_n(a),f(a))<{\varepsilon}/3$ for every $n\geq n_0$.  Let  $V$ 
be a nonempty open set in $X$ such that $V\subset U$ and $d(f(a),f(x))<{\varepsilon}/3$ for 
every   $x\in V$. The mapping $g_x$ being continuous at $\infty$ for any $x\in A$, and $A\cap R$  being dense 
 in $X$ for any residual subset $R$ of $X$, it follows from  Theorem 3.3 that  there is $b\in V$ 
such  that $g$ is continuous  at $(b,\infty)$;   in particular,  there  is a nonempty  open set $O\subset V$ and $n_1\geq n_0$ 
such that $d(f(b),f_n(x))<{\varepsilon}/3$  for every $x\in O$ and $n\geq n_1$. For every $x\in O$ and $n\geq n_1$, we have
 $$d(f_n(x),f_n(a))\leq d(f_n(x),f(b))+d(f(b),f(a))+d(f(a),f_n(a))< \varepsilon.$$
 \end{proof}

\end{document}